\def\margin_comment#1{\marginpar{\sffamily{\tiny #1\par}\normalfont}}
\date{}
 \newtheorem{thm}{Theorem}[section]
 \numberwithin{equation}{section} 
 \numberwithin{figure}{section} 
 \theoremstyle{plain}
  \newtheorem*{thm*}{Theorem}
 \theoremstyle{definition}
\theoremstyle{plain}
\newtheorem{thm_A}{Theorem}
 \newtheorem*{defn*}{Definition}
 \theoremstyle{plain}
 \newtheorem{prop}[thm]{Proposition} 
 \theoremstyle{remark}
 \newtheorem{ex}[thm]{Example}
 \theoremstyle{remark}
\theoremstyle{plain}
\theoremstyle{plain}
 \newtheorem{cor}[thm]{Corollary}
 \theoremstyle{plain}
 \newtheorem{lem}[thm]{Lemma} 
 \theoremstyle{definition}
 \newtheorem{defn}[thm]{Definition}
\newtheorem*{acknowledgment}{Acknowledgment}
\newcommand{\Div}[1]{\operatorname{Div}(#1)}
\begin{document}
\title{Folding of set-theoretical solutions of the Yang-Baxter equation}

\author{Fabienne Chouraqui$^{*}$ and Eddy Godelle\footnote{Both authors are partially supported by the {\it Agence Nationale de la Recherche} ({\it projet
Th\'eorie de Garside}, ANR-08-BLAN-0269-03). The first author is also supported by the Affdu-Elsevier fellowship.}}
\maketitle

\begin{abstract}
 We establish a correspondence between the invariant subsets of a non-degenerate symmetric set-theoretical solution of the quantum Yang-Baxter equation and the parabolic subgroups of its structure group, equipped with its canonical Garside structure. Moreover, we introduce the notion of a foldable solution, which extends the one of a decomposable solution.
\end{abstract}
AMS Subject Classification: 16T25, 20F36.\\
Keywords: set-theoretical solution of the quantum Yang-Baxter equation; parabolic subgroups of Garside groups; folding.

\section{Introduction}

The \emph{Quantum Yang-Baxter Equation} (\emph{QYBE} for short) is an important equation in the field of mathematical physics, and it lies in the foundation of the theory of quantum groups. Finding all the solutions of this equation is an important issue.

Let $V$ be a vector space with base~$X$ and $S: X\times X\to X\times X$ be a bijection.  The pair~$(X,S)$ is said to be a \emph{set-theoretical solution} of the QYBE if the linear operator~$R:V  \otimes V \rightarrow V  \otimes V$ induced by $S$ is a solution to the QYBE. The question of the classification of the set-theoretical solutions was raised by Drinfeld in~\cite{drinf}, and since has been the object of numerous recent articles~\cite{cedoetall,chou_art,etingof,gateva04,gateva08,gateva98}. In~\cite{etingof}, Etingof, Soloviev and Schedler, focus on  set-theoretical solutions that are non-degenerate and symmetric. In order to approach the classification problem, they introduce and use the notion of an \emph{invariant subset}. They associate a group called the \emph{structure group} to each non-degenerate and symmetric set-theoretical solution. In \cite{chou_art}, the first author establishes a one-to-one correspondence between non-degenerate and symmetric set-theoretical solutions of the QYBE and \emph{Garside group presentations} which satisfy some additional conditions. The notion of a Garside group is also the object of numerous articles~\cite{bessis,deh_francais,godelle,picantin} and is a natural generalisation of the notion of an Artin-Tits group. In particular the second author proves in~\cite{godelle} that the classical notion of a \emph{standard parabolic subgroup} of an Artin-Tits group can be extended to the general framework of Garside groups. The first objective of the present paper is to show that the two notions of an \emph{invariant subset} and of a \emph{standard parabolic subgroup} are deeply related in the context of non-degenerate symmetric set-theoretical solutions of the QYBE. More precisely (see next section for definitions and notations) we prove:

\begin{thm_A}\label{theo:theoprincip}
 Let $X$ be a finite set, and $(X,S)$ be a non-degenerate symmetric set-theoretical solution of the quantum Yang-Baxter equation. Let $G$ be the structure group of $(X,S)$. For $Y\subseteq X$, denote by $G_Y$ the subgroup of $G$ generated by $Y$. The map $Y\mapsto G_Y$ induces a one-to-one correspondance between the set of invariant non-degenerate subsets of $(X,S)$ and the set of standard parabolic subgroups of $G$.
\end{thm_A}

Indeed, in order to classify non-degenerate symmetric set-theoretical solution of the quantum Yang-Baxter equation, Etingof, Soloviev and Schedler introduced in~\cite{etingof} the notion of a \emph{decomposable solution}. This is a solution which is the union of two disjoint non-degenerate invariant subsets. In the last part of the article, we extend the notion of a \emph{folding of Coxeter graph} introduced by Crisp to study morphisms between Artin-Tits groups to the context of set-theoretical solutions of the QYBE. The notion of a \emph{foldable solution} can be seen as a generalisation of the notion of decomposable solution. We prove that every Garside subgroup, that verifies some obvious necessary condition, is associated to a  set-theoretical solution, and

\begin{thm_A}\label{theo:theoprincip2}
Let $X$ be a finite set, and $(X,S)$ be  a non-degenerate, symmetric set-theoretical solution of the QYBE. The pair~$(X,S)$ is decomposable if and only if it has a strong folding~$(X',S')$ which a trivial solution and such that $\# X' = 2$.
 \end{thm_A}

The paper is organized as follows.
In Section~$2$, we introduce the QYBE, the notion of a non-degenerate symmetric set-theoretical solution, and its structure group.
In Section $3$, we provide the necessary background on Garside groups and their parabolic subgroups. In Section $4$, we prove Theorem~\ref{theo:theoprincip}. In Section $5$, we introduce the notion of a folding and prove Theorem~\ref{theo:theoprincip2}.
\begin{acknowledgment}
The first author is  very grateful to Arye Juhasz for fruitful conversations. The authors are also grateful to Pavel Etingof and Travis Schedler for personnal communications~\cite{etingof2}.
\end{acknowledgment}

\section{Set-theoretical solutions  of the QYBE}
In this section, we introduce basic definitions and useful notions related to the Quantum Yang-Baxter Equation.  We follow~\cite{etingof} and refer to it for more details.\\

For all the section, we fix a vector space~$V$. The Quantum Yang-Baxter Equation on~$V$ is the equality $$R^{12}R^{13}R^{23}=R^{23}R^{13}R^{12}$$ of linear transformations on $V  \otimes V \otimes V$ where the indeterminate is a linear transformation~$R: V  \otimes V\to V  \otimes V$, and $R^{ij}$ means $R$ acting on the $i$th and $j$th components. A set-theoretical solution of this equation is a pair~$(X,S)$ such that $X$ is a basis for $V$, and $S : X \times X \rightarrow X \times X$ is a bijective map that induces a solution~$R$ of the QYBE.
\subsection{The structure group of a set-theoretical solution}
Consider a set-theoretical solution~$(X,S)$ of the QYBE. Following \cite{etingof}, for $x,y$ in $X$, we set  $S(x,y)=(g_{x}(y),f_{y}(x))$.
\begin{defn}
(i) The pair~$(X,S)$ is \emph{nondegenerate} if the maps $f_{x}$ and $g_{x}$ are bijections for any  $x\in X$.\\
(ii) The pair~$(X,S)$ is  \emph{involutive} if $S\circ S = Id_X$.\\
(iii) The pair~$(X,S)$ is \emph{braided} if $S$ satisfies the braid relation $S^{12}S^{23}S^{12}=S^{23}S^{12}S^{23}$, where the map $S^{ii+1}: X^{n} \rightarrow X^{n}$ is defined by $S^{ii+1}=id_{X^{i-1}} \times S\times id_{X^{n-i-1}} $, $i<n$.\\
(iv) The pair $(X,S)$ is \emph{symmetric} if  $(X,S)$ is involutive and braided.\\
\end{defn}

Let $\alpha:X \times X \rightarrow X\times X$ be the permutation map, i.e $\alpha(x,y)=(y,x)$, and let $R=\alpha \circ S$. The map $R$ is called the \emph{$R-$matrix corresponding to} $S$. Etingof, Soloviev and Schedler show in \cite{etingof}, that $(X,S)$ is a braided pair if and only
 if $R$ satisfies the QYBE, and that $(X,S)$ is a symmetric pair if and only if in addition $R$ satisfies the unitary condition $R^{21}R=1$. A solution ~$(X,S)$ is \emph{a trivial solution} if the maps $f_{x}$ and $g_{x}$ are the identity on $X$ for all  $x\in X$, that is  $S$ is the permutation map on $X\times X$. In the sequel we are interested in non-degenerate symmetric pairs.

\begin{defn} Assume~$(X,S)$ is non-degenerate and symmetric. The \emph{structure group} of $(X,S)$ is defined to be the group~$G(X,S)$ with the following group presentation:
$$\langle X\mid \forall x,y\in X,\ xy = g_x(y)f_y(x) \rangle$$
\end{defn}

Since the maps $g_x$ are bijective and $S$ is involutive, one can deduce that for each $x$ in $X$ there is a unique $y$ such that~$S(x,y) = (x,y)$. Therefore, the presentation of~$G(X,S)$ contains $\frac{n(n-1)}{2}$ non-trivial relations.

\begin{ex} \label{exemple:exesolu_et_gars} Let $X$ be the set~$\{x_1,\cdots, x_5\}$, and~$S$ be the map defined by $S(x_i,x_j)=(x_{\sigma_{i}(j)},x_{\tau_{j}(i)})$ where~$\sigma_i$ and $\tau_j$ are the following permutations on~$\{1,\cdots,5\}$: $\tau_{1}=\sigma_{1}= \tau_{3}=\sigma_{3}=(1,2,3,4)$; $\tau_{2}=\sigma_{2}= \tau_{4}=\sigma_{4}= (1,4,3,2)$ and
$\tau_5 = \sigma_5 =  id_{\{1,\cdots,5\}}$. A case-by-case analysis shows that $(X,S)$ is a non-degenerate symmetric solution. Its structure group is generated by the set~$X$ and defined by the~$10$ following relations: $$\begin{array}{ccccc}
  x^{2}_{1}=x^{2}_{2};&  x_{2}x_{5}=x_{5}x_{2};&x_{1}x_{2}=x_{3}x_{4};&  x_{1}x_{5}=x_{5}x_{1};&
x_{1}x_{3}=x_{4}x_{2};\\ x^{2}_{3}=x^{2}_{4};&
x_{2}x_{4}=x_{3}x_{1};& x_{3}x_{5}=x_{5}x_{3};&
x_{2}x_{1}=x_{4}x_{3};&x_{4}x_{5}=x_{5}x_{4}.
\end{array}$$
\end{ex}

\subsection{Decomposability of a solution}

Here we introduce the crutial notion of a \emph{decomposable solution}.
\begin{defn}\label{def_decomposale}\cite{etingof}
$(i)$ A subset $Y$ of a non-degenerate symmetric set-theoretical solution $X$ is said to be an \emph{invariant} subset if $S(Y \times Y)\subseteq Y \times Y$.\\
$(ii)$ An invariant subset $Y$  is said to be  \emph{non-degenerate} if $(Y,S\mid_{Y\times Y})$ is a non-degenerate and symmetric set.\\
$(iii)$ A non-degenerate and symmetric solution $(X,S)$ is said to be \emph{decomposable} if $X$ is a union of two nonempty disjoint non-degenerate invariant subsets. Otherwise,  $(X,S)$ is said to be \emph{indecomposable}.
\end{defn}
In \cite{etingof}, Etingof et al show that if $X$ is finite, then any invariant subset $Y$ of $X$ is non-degenerate. Moreover, they show that if  $(X,S)$ is non-degenerate and braided then the assignment $x \rightarrow f_{x}$ is a right action of $G(X,S)$ on $X$ and that $(X,S)$ is indecomposable if and only if $G(X,S)$ acts transitively on $X$. They give a classification of non-degenerate symmetric solutions with $X$ up to $8$ elements, considering their decomposability and other properties.
Gateva-Ivanova conjectured that every square-free, non-degenerate symmetric solution~$(X,S)$ is decomposable whenever $X$ is finite. This has been proved by Rump in~\cite{rump}. He also prove that the extension to an infinite set~$X$ is false.
Finally, in \cite{chou_art}, the first author finds a criterion for decomposability of the solution involving the Garside structure of the structure group. She proves that a non-degenerate symmetric solution $(X,S)$  is indecomposable if and only if its structure group is $\Delta$-pure Garside.
\section{Garside monoids and groups}
We turn now to the notion of a Garside group. We only recall the basic material that we need, and refer to~\cite{deh_francais}, \cite{deh_livre} and ~\cite{godelle} for more details.

\subsection{Garsideness}
We start with some preliminaries. If $M$ is a monoid generated by a set $X$, and if $x\in M$ is the image of the word~$w$ by the canonical morphism from the free monoid on $X$ onto $M$, then we say that \emph{$w$ represents $x$}. A monoid~$M$ is \emph{cancellative} if for every~${x,y,z,t}$ in $M$, the equality~$xyz = xtz$ implies ~$y = t$. The element $x$ is a \emph{left divisor}  ({\it resp.} a \emph{right divisor}) of $z$ if there is an element $t$ in $M$ such that $z=xt$ ({\it resp.} $z = tx$).  It is left noetherian ({\it resp.}  \emph{right noetherian}) if every sequence~$(x_n)_{n\in\mathbb{N}}$ of elements of $M$ such that $x_{n+1}$ is a left divisor ({\it resp.} a right divisor) of $x_n$ stabilizes. It is noetherian if it is both left and right noetherian. An element~$\Delta$ is said to be \emph{balanced} if it has the same set of right and left divisors. In this case, we denote by~$\Div{\Delta}$ its set of divisors. If~$M$ is a cancellative and noetherian monoid, then left and right divisibilities are partial orders on~$M$.
\begin{defn} (i) A \emph{locally Garside monoid} is a cancellative noetherian monoid such that any two elements have a common multiple for left-divisibility and right-divisibility if and only if they have a least common multiple for left-divisibility and right-divisibility, respectively.\\
(ii)  A \emph{Garside element} of a locally Garside monoid is a balanced element~$\Delta$ whose set of divisors~$\Div{\Delta}$ generates the whole monoid. When such an element exists, then we say that the monoid is a \emph{Garside monoid}.\\
(iii) A (\emph{locally}) \emph{Garside group}~$G(M)$ is the enveloping group of a (locally) Garside monoid~$M$.
\end{defn}
Garside groups have been first introduced in~\cite{DePa}. The seminal example are the so-called \emph{Artin-Tits groups}. Among these groups, \emph{Spherical type Artin-Tits groups} are special examples of Garside groups. We refer to~\cite{DiM} for general results on locally Garside groups. Recall that an element $x\neq 1$ in a monoid is called an \emph{atom} if the equality $x = yz$ implies $y = 1$ or $z = 1$. It follows from the definining properties of a Garside monoid that the following properties holds for a Garside monoid~$M$: The monoid~$M$ is generated by its set of atoms, and every atom divides the Garside elements. Any two elements in $M$ have a left and right gcd and lcm; in particular, $M$ verifies the Ore's conditions, so it embeds in its group of fractions~\cite{Clifford}. The left and right lcm of two Garside elements are Garside elements and coincide; therefore, by the noetherianity property there exists a unique minimal Garside element for both left and right
  divisibilities. This element will be called \emph{the} Garside element of the monoid in the sequel.

\begin{ex}\cite{chou_art}
\label{example_struct_gp} Consider the notation of Example~\ref{exemple:exesolu_et_gars}. The group $G(X,S)$ is a Garside group. The Garside element $\Delta$ is the right and left lcm of $X$, and is represented by $x_{1}^{4}x_{5}$,  $x_{2}^{4}x_{5}$, $x_{3}^{4}x_{5}$, $x_{4}^{4}x_{5}$, and others.
 \end{ex}
\subsection{Parabolic subgroups}
The notion of a parabolic subgroup of an Artin-Tits group is well-known. In~\cite{godelle}, the second author extends this notion to the wider context  of Garside groups.

\begin{defn}\label{def:parabsubgrp} Let $M$ be a Garside monoid with Garside element~$\Delta$.
\\(i) A submonoid $N$ of $M$ is \emph{standard parabolic} if there exists~$\delta$ in $\Div{\Delta}$ that is balanced and such that~$\Div{\delta}$ generates~$N$ with $N\cap \Div{\Delta} = \Div{\delta}$.
\\(ii) A \emph{standard parabolic} subgroup of the Garside group~$G(M)$ is a subgroup generated by a parabolic submonoid.
\end{defn}

In the sequel, we denote by $M_\delta$ the monoid~$N$ in the above definition.

\begin{lem} \cite{godelle} Let $M$ be a Garside monoid with Garside element~$\Delta$, and consider a standard parabolic submonoid~$M_\delta$. Then $M_\delta$ is a Garside monoid with~$\delta$ as a Garside element. Moreover, the Garside
 group~$G(M_\delta)$ is isomorphic to the parabolic subgroup of $G(M)$ generated by $M_\delta$.
\end{lem}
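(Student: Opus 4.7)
The plan is to verify the four defining properties of a Garside monoid for $M_\delta$, then deduce the group statement via Ore's conditions. Throughout, I exploit two facts from the definition: $\Div{\delta}$ generates $M_\delta$, and $M_\delta \cap \Div{\Delta} = \Div{\delta}$ (which, combined with the balancedness of $\delta$ in $M$, already forces $\Div{\delta}$ to be closed under taking divisors in $M$).

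Step 1 (inherited structure). Cancellativity and left/right noetherianity pass from $M$ to any submonoid, so $M_\delta$ satisfies both.

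Step 2 (closure under divisors, the main technical point). I would prove that if $a \in M_\delta$ and $b$ is a left divisor of $a$ in $M$, then $b \in M_\delta$, and likewise for right divisors. The proof goes by induction on the length of a decomposition $a = g_1 \cdots g_k$ with each $g_i \in \Div{\delta}$. The base case uses $\Div{\delta} \subseteq \Div{\Delta}$ together with the fact that divisors in $M$ of a divisor of $\Delta$ are again in $\Div{\Delta}$, and the hypothesis $M_\delta \cap \Div{\Delta} = \Div{\delta}$ forces them to lie in $\Div{\delta}$. For the inductive step, one writes $b$ as the left part of an lcm computation between $b$ and $g_1$ in $M$, uses that $\Div{\delta}$ is stable under left and right lcm (a consequence of balancedness of $\delta$ and the first case), then applies the induction hypothesis to the remaining right factor. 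This is the heart of the matter and, I expect, the main obstacle: it is precisely where the two conditions in Definition~\ref{def:parabsubgrp}(i) get intertwined with balancedness.

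Step 3 ($\delta$ as a Garside element of $M_\delta$). From Step 2, left (respectively right) divisors of $\delta$ in $M_\delta$ coincide with $M_\delta \cap \Div{\delta} = \Div{\delta}$, so $\delta$ is balanced in $M_\delta$ and its set of divisors in $M_\delta$ is $\Div{\delta}$, which by assumption generates $M_\delta$. Similarly, if $a, b \in M_\delta$ have a common multiple in $M_\delta$, they have one in $M$, hence an lcm $\ell$ in $M$; but $\ell$ divides any common multiple in $M_\delta$, so Step 2 gives $\ell \in M_\delta$, and $\ell$ is automatically the lcm in $M_\delta$ because divisibility in $M_\delta$ is the restriction of divisibility in $M$. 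Thus $M_\delta$ is locally Garside with Garside element $\delta$, and being finitely generated by divisors of $\delta$, it is Garside.

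Step 4 (group isomorphism). Since $M_\delta$ is Garside, it satisfies Ore's conditions and embeds in its enveloping group $G(M_\delta)$. The inclusion $M_\delta \hookrightarrow M \hookrightarrow G(M)$ induces a homomorphism $\phi : G(M_\delta) \to G(M)$ whose image is, by construction, the subgroup generated by $M_\delta$, i.e.\ the standard parabolic subgroup. For injectivity, every element of $G(M_\delta)$ can be written as $xy^{-1}$ with $x, y \in M_\delta$ (by Ore). If $\phi(xy^{-1}) = 1$ in $G(M)$ then $x = y$ in $M$, hence $x = y$ in $M_\delta$ (the inclusion $M_\delta \hookrightarrow M$ is injective), and $xy^{-1} = 1$ already in $G(M_\delta)$. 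Thus $\phi$ is injective and identifies $G(M_\delta)$ with the parabolic subgroup generated by $M_\delta$.
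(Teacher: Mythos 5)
The paper does not actually prove this lemma; it is imported verbatim from \cite{godelle} (where it is assembled from Proposition~2.5 and the surrounding results), so there is no in-paper argument to compare yours with. Judged on its own terms, your outline has the right global shape: Steps~1, 3 and~4 are routine once Step~2 is available, and your Ore-fraction argument for the injectivity of $G(M_\delta)\to G(M)$ is the standard one. But Step~2, which you correctly single out as the heart of the matter, is not yet a proof.

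Two problems there. First, the base case as written is circular: knowing that a divisor $b$ of $g_1$ lies in $\Div{\Delta}$ lets you conclude $b\in\Div{\delta}$ via the hypothesis $M_\delta\cap\Div{\Delta}=\Div{\delta}$ only if you already know $b\in M_\delta$, which is what is to be proved. (The fix is immediate --- $b$ divides $g_1$, which divides the balanced element $\delta$, so $b\in\Div{\delta}$ directly --- and your preamble essentially says this, so this is a slip rather than a gap.) Second, and more seriously, the inductive step does not close. Writing $c=b\vee g_1=g_1u$ with $u\preceq g_2\cdots g_k$, the induction hypothesis gives $u\in M_\delta$, hence $c\in M_\delta$; but you are then left knowing only that $b$ left-divides $c=g_1u$, an element whose decomposition into elements of $\Div{\delta}$ has length $1$ plus the length of a decomposition of $u$, which need not be smaller than $k$. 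No measure decreases, so the induction never terminates. What is missing is the mechanism by which the hypothesis $M_\delta\cap\Div{\Delta}=\Div{\delta}$ actually enters: one first shows that for $x,y\in\Div{\delta}$ the head $(xy)\wedge\Delta$ again lies in $\Div{\delta}$ (it equals $xh'$ with $h'\preceq y$, hence lies in $M_\delta\cap\Div{\Delta}$), deduces that the greedy normal form of every element of $M_\delta$ has all its factors in $\Div{\delta}$, and only then gets closure under division by comparing the normal form of $b$ with that of $a$ term by term, using $b\wedge\Delta\preceq a\wedge\Delta\in\Div{\delta}$ and an induction on the length of $b$. Your sketch never invokes the normal form, and the condition $M_\delta\cap\Div{\Delta}=\Div{\delta}$ appears only in the circular base case, so the one genuinely non-trivial ingredient of the lemma is absent.
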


\begin{ex} If $M$ is an Artin-Tits monoid, then its classical parabolic submonoids are the parabolic submonoids defined by the associated Garside structure. More precisely, these are the submonoids generated by any set of atoms of $M$.
\end{ex}

\begin{ex}
Consider the notation of Example~\ref{exemple:exesolu_et_gars}. There are two non-trivial standard parabolic subgroups. One is generated by  $\{x_{1},x_{2},x_{3},x_{4}\}$, and  the other is generated by~$\{x_{5}\}$.
 \end{ex}

\section{Parabolic subgroups of the structure group}
The following result explains the deep connection between the theory of set-theoretical solutions of QYBE and that of  Garside groups.
\begin{thm}\cite[Thm.1]{chou_art}\label{thm_garsidetableau_structuregp2}
(i) Assume that  $\operatorname{Mon} \langle X\mid R \rangle$ is a  Garside monoid such that:
\begin{enumerate}
\item[(a)] the cardinality of $R$ is $n(n-1)/2$, where $n$ is the cardinality of $X$ and each side of a relation in $R$ has length 2 and
\item[(b)] if the  word $x_{i}x_{j}$ appears in $R$, then it appears only once.
\end{enumerate}
 Then, there exists a function $S: X \times X \rightarrow X \times X$ such that $(X,S)$ is  a non-degenerate symmetric set-theoretical solution and $\operatorname{Gp} \langle X\mid R \rangle$ is its structure group.\\
(ii) For every non-degenerate symmetric set-theoretical solution~$(X,S)$, the structure group $G(X,S)$ is a Garside group, whose Garside monoid is as above.
\end{thm}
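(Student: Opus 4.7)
My plan is to prove the two directions by setting up an explicit translation between the defining relations~$R$ and the bijection~$S$, and matching the structural hypotheses on each side. In both directions the key intermediate object is the length-$2$ rewriting $xy\mapsto g_x(y)f_y(x)$, and the bridge between the Garside world and the QYBE world is the agreement of the two reductions of a length-$3$ word $xyz$, which simultaneously encodes the braid relation $S^{12}S^{23}S^{12}=S^{23}S^{12}S^{23}$ on the $S$-side and the existence of a least common multiple of $xy$ and $yz$ on the Garside side.

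For part~(i), I would define $S$ directly from the relations: for each $xy=uv$ in $R$, set $S(x,y)=(u,v)$, and extend by the identity on the remaining $n$ ordered pairs. Hypothesis~(b) makes this well-defined, involutivity is built in by the symmetric role of the two sides of a relation, and the count $|R|=n(n-1)/2$ together with cancellativity in the Garside monoid force the ``unused'' $n$ pairs to be distributed so that $S$ is a bijection and each $g_x$, $f_y$ is a permutation. The braid identity is then obtained by comparing the two reductions of $xyz$: both must represent the same element of the cancellative monoid, and unwinding this equality in terms of $S$ produces exactly the braid relation. The symmetry (involutivity plus braid) of the resulting solution then follows immediately from the construction.

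For part~(ii), take $R=\{xy=g_x(y)f_y(x)\}$; the counting recalled in the excerpt gives $|R|=n(n-1)/2$, and (b) is automatic since the right-hand side is a function of $(x,y)$. The substantive task is to exhibit a Garside structure on $\operatorname{Mon}\langle X\mid R\rangle$: cancellativity and noetherianity should follow from the length-preserving nature of the rewriting combined with the non-degeneracy of $S$; the braid relation provides the confluence needed for lcms of length-$2$ words; and an explicit Garside element $\Delta$ should be built as a common multiple of~$X$ by iterating~$S$ (as illustrated in Example~\ref{example_struct_gp}, where~$\Delta$ is represented by $x_i^4x_5$ for each~$i$).

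The main obstacle in both directions is precisely this translation between the three-variable braid identity on~$S$ and the lattice-theoretic lcm condition defining a Garside monoid. The hardest technical step, I expect, is the construction and verification of the Garside element in part~(ii): producing a canonical length function, bounding the lengths of elements in each divisibility class, and showing that every atom left- and right-divides~$\Delta$. This is where non-degeneracy and symmetry of~$(X,S)$ must be combined to control the global structure of the monoid, rather than just the behaviour on length-$2$ or length-$3$ words.
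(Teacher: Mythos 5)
First, note that the paper does not prove this statement at all: it is quoted verbatim as Theorem~1 of \cite{chou_art} and used as imported background, so there is no internal proof to compare your outline against. Judged on its own terms, your proposal is a sensible road map of the known strategy (the dictionary between length-two relations and the map~$S$, and between the braid identity and the behaviour of lcms of atoms), but at the two places where the theorem actually has content it asserts rather than proves.

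In part~(i), the step ``both reductions of $xyz$ represent the same element of the cancellative monoid, and unwinding this equality produces the braid relation'' conflates equality of monoid elements with equality of words: two length-three words representing the same element of a cancellative monoid need not coincide letter by letter, and the braid relation is precisely the assertion that the two specific words output by $S^{12}S^{23}S^{12}$ and $S^{23}S^{12}S^{23}$ agree. What is actually needed is the lattice structure on $\Div{\Delta}$ (existence and uniqueness of the right lcm of two, resp.\ three, atoms, i.e.\ the cube condition for the word-reversing process); the same structure --- not a counting argument plus cancellativity --- is what yields unique right and left complements of pairs of atoms and hence the bijectivity of the maps $g_x$ and $f_y$. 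In part~(ii), cancellativity, noetherianity and the existence of the Garside element $\Delta$ (the length-$n$ lcm of $X$) are the entire content of the statement; they do not follow from ``the length-preserving nature of the rewriting combined with non-degeneracy'' (homogeneous presentations are in general far from cancellative), but rest on the fact that the structure monoid of a non-degenerate symmetric solution is a monoid of $I$-type in the sense of Gateva-Ivanova and Van den Bergh \cite{gateva98}, equivalently on the completeness of reversing for this presentation. Your outline correctly locates these difficulties but does not supply the arguments that overcome them.
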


Our objective in this section is to show that the connection is even deeper. Indeed, we prove Theorem~\ref{theo:theoprincip}. As a first step, we show that an invariant subset generates a standard parabolic subgroup (see Proposition~\ref{thm_invariant_parabolic}). In a second step, we prove that every such subgroup arises in this way (see Proposition~\ref{thm_parabolic_invariant}).
\subsection{From invariant subsets to standard parabolic subgroups}
For all this section, we assume $X$ is a finite set, and $(X,S)$ is a non-degenerate symmetric set-theoretical solution of the QYBE. For short, we write $G$ for $G(X,S)$. We denote by $M$ the submonoid of $G$ generated by $X$, and by $\Delta$  the Garside element of $M$. We recall that we denote by~$\Div{\Delta}$ the set of divisors of~$\Delta$. We fix an invariant subset~$Y$ of $X$, we denote by $\delta$ the right lcm of $Y$ in $M$, and by $M_Y$ and $G_Y$ the submonoid and the subgroup, respectively, of $G$ generated~$Y$.

\begin{prop}[\cite{chou_art}]\label{prop_simple_lcm}
(i) The Garside element~$\Delta$ is the lcm of $X$ for both left and right divisibilities.\\
(ii) Let $s$ belong to $M$. Then, \begin{center}\begin{tabular}{ll}&$s$ belongs to~$\Div{\Delta}$\\  $\iff$&$\exists X_{\ell}\subseteq X$ such that~$s$ is the right lcm of~$X_{\ell}$\\$\iff$& $\exists X_{r}\subseteq X$ such that~$s$ is the left lcm of~$X_r$.\end{tabular}\end{center}(iii) If~$s$ belongs to $\Div{\Delta}$ then the subsets $X_{\ell}$ and $X_r$ defined in Point~(ii) are unique and have the same cardinality.
\end{prop}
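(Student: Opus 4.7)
The plan is to leverage two structural facts of $M$: all defining relations have length $2$ on each side, giving a well-defined length function $\ell\colon M \to \mathbb{N}$; and each unordered pair $\{x_i,x_j\} \subseteq X$ occurs in exactly one relation (by the counting $n(n-1)/2$ together with hypothesis (b) of Theorem~\ref{thm_garsidetableau_structuregp2}). Throughout I use that $X \subseteq \Div{\Delta}$, since $\Div{\Delta}$ generates $M$ and atoms must belong to every generating set.

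For (i), let $\Delta^r$ denote the right lcm of $X$ in $M$, obtained by iterating pairwise right lcms in the Garside lattice. Since $\Delta$ is balanced with $X \subseteq \Div{\Delta}$, it is a common right multiple of $X$, so $\Delta^r$ left-divides $\Delta$. The main step is to promote $\Delta^r$ to a Garside element. First, using the unique defining relation $x_i x_j = g_{x_i}(x_j) f_{x_j}(x_i)$ together with the bijectivity of each $f_x$ and $g_x$, one shows inductively that the right lcm of any $k$-element subset of $X$ has length exactly $k$; in particular $\ell(\Delta^r) = |X|$. The same combinatorial argument, applied to right divisors instead of left divisors, shows that every atom right-divides $\Delta^r$, so $\Delta^r$ is balanced. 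Since $X \subseteq \Div{\Delta^r}$ generates $M$, $\Delta^r$ is a Garside element, hence $\Delta \mid \Delta^r$ by minimality of $\Delta$; combined with $\Delta^r \mid \Delta$, equality follows. The left lcm case is symmetric.

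For (ii), the ``$\Leftarrow$'' directions are immediate since $\Delta$ is a common right (resp. left) multiple of any subset of $X$. For ``$\Rightarrow$'', given $s \in \Div{\Delta}$, set $X_\ell(s) = \{x \in X : x \text{ left-divides } s\}$ and argue by induction on $\ell(s)$ that $s$ is the right lcm of $X_\ell(s)$: the base case $\ell(s) \leq 1$ is trivial, and in the inductive step I strip off a suitable left atom of $s$, apply the induction hypothesis to the residue, and recombine using associativity of the right lcm together with the length identity above. Part (iii) then follows at once: the characterisation $X_\ell = \{x \in X : x \text{ left-divides } s\}$ (and symmetrically for $X_r$) forces uniqueness, and the length identity of part (i) gives $|X_\ell| = \ell(s) = |X_r|$.

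The main obstacle I anticipate is the length claim underlying parts (i) and (iii): the right lcm of any $k$-element subset of $X$ has length exactly $k$. This is not a formal Garside consequence but depends on the specific presentation of $(X,S)$ — each pair of atoms appears in exactly one length-$2$ relation, coupled with non-degeneracy and symmetry, which prevents any ``collapse'' of length when iterating pairwise lcms. Once this length identity is in place, both the balancedness of $\Delta^r$ and the equal-cardinality assertion in (iii) drop out cleanly.
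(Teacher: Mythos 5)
First, note that the paper does not reprove this proposition: it is stated as imported from \cite{chou_art}, and the proof given in the text merely locates each point there (point~(i) in the construction of the Garside structure, the first equivalence of~(ii) in \cite[Prop.~4.4]{chou_art}, and point~(iii) in the length computation inside the proof of \cite[Thm.~4.7]{chou_art}). Your reconstruction is therefore more ambitious than what the paper itself does, and its skeleton is consistent with the cited source: the identity you isolate --- the right lcm of $k$ distinct elements of $X$ has length exactly $k$ --- is precisely the fact the paper quotes from \cite[Thm.~4.7]{chou_art} to justify~(iii).

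The difficulty is that this identity, which you yourself call the main obstacle and on which your arguments for (i), (ii) and (iii) all rest, is never actually proved: you assert that it follows ``inductively'' from non-degeneracy and the uniqueness of the relation containing each word $x_ix_j$, but that induction is exactly the non-trivial content of \cite[Thm.~4.7]{chou_art}, so as written the proposal assumes what it most needs to establish. Two further steps are non sequiturs as stated. First, ``every atom right-divides $\Delta^r$, so $\Delta^r$ is balanced'': knowing that all atoms are right divisors of $\Delta^r$ does not yield that the sets of left and right divisors of $\Delta^r$ coincide; you would need, for instance, to show that the left lcm of $X$ equals $\Delta^r$ (a length count gives this once the length identity is available) and then separately match up all divisors. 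Second, in the induction for (ii)~``$\Rightarrow$'', the base case of length $2$ is already non-trivial: if $s=xy\in\Div{\Delta}$ and the unique relation containing the word $xy$ is the trivial one (that is, $S(x,y)=(x,y)$), then $xy$ is not the right lcm of two distinct atoms, and this configuration must be excluded; nothing in the proposal does so. The recombination step (from $s=xs'$ with $s'=y_1\vee\cdots\vee y_{m-1}$ to $s=x\vee z_1\vee\cdots\vee z_{m-1}$, where $xy_i=z_it_i$ are the defining relations) can be made to work via left-multiplicativity of the right lcm, but it again needs the length identity to see that the $z_i$ are pairwise distinct and distinct from $x$. In short: the architecture is right and matches the source the paper relies on, but the load-bearing lemma is asserted rather than proved, and the balancedness and base-case steps need genuine arguments.
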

\begin{proof}
Point~(i) is proved in~\cite{chou_art}. Only the first equivalency of (ii) is proved in~\cite[Prop.~4.4]{chou_art}, but the second one is implicit there. Finally, Point~(iii) is also a direct consequence of~\cite{chou_art}:  Indeed, from the proof of \cite[Theorem 4.7]{chou_art}, the length of the right lcm of $k$ distinct elements of $X$ is equal to $k$  and in a dual way, the same result holds for the left lcm.
\end{proof}

 Note that~$X_{\ell}$ and~$X_{r}$ may be different. In the sequel, for~$s$ in $\Div{\Delta}$, we denote by~$X_{\ell}(s)$ and~$X_r(s)$ the subsets $X_{\ell}$ and $X_r$ defined in Lemma~\ref{prop_simple_lcm}. For instance, $X_{\ell}(\Delta) = X_r(\Delta) = X$, and $X_{\ell}(\delta) = Y$. It is interesting to note that the equality $X_{\ell}(s)=X_{r}(s)$ does not imply that $s$ is balanced. In Example \ref{example_struct_gp}, it holds that $x_{1}^{3}$ is the left and right lcm of the generators $x_{1}$, $x_{2}$, and $x_{3}$, but $x_{1}^{3}$ is not balanced since $x_{1}x_{2}$ is a left divisor but not a right divisor.
\begin{lem}\label{lem_invariant_balanced}
(i) If $s$ belongs to~$M_Y$, then all the  letters in  a word that represents~$s$ belong to~$Y$. In particular, every left or right divisor of $s$ lies in $M_Y$.\\
(ii) Let $s$ belong to $\Div{\Delta}$. Then $$s\in M_Y \iff X_{\ell}(s)\subseteq Y \iff X_r(s)\subseteq Y.$$
In particular, $\delta$ belongs to $M_Y$.\\
(iii) The monoid~$M_Y$ is equal to $M \cap G_{Y}$.
 \end{lem}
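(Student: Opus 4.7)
The plan is to prove the three parts in order, with part~(i) providing the tool used throughout. For part~(i), I would exploit two features of the presentation of~$M$: every defining relation has the form $xy = g_x(y)f_y(x)$ with both sides of length~$2$, and invariance of~$Y$ means that if $x,y \in Y$ then $g_x(y), f_y(x) \in Y$. Fix $s \in M_Y$ and a word~$w_0$ over~$Y$ representing~$s$. Any other word~$w$ representing~$s$ is joined to~$w_0$ by a finite chain of single-relation rewrites; a rewrite applied to a word entirely over~$Y$ substitutes a factor $xy$ (with $x,y \in Y$) by $g_x(y)f_y(x)$, which is again over~$Y$ by invariance. Induction on the chain length shows $w$ is over~$Y$, proving the first statement of~(i). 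The second statement is an immediate consequence: if $s = tu$ with $s \in M_Y$, choose words for~$t$ and~$u$ and concatenate.

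For part~(ii), the forward direction follows from~(i): every atom left-dividing~$s$ lies in~$M_Y$, and an atom of~$M$ that lies in~$M_Y$ must be a single letter from~$Y$, so $X_\ell(s) \subseteq Y$; the argument for $X_r(s)$ is symmetric. For the reverse direction, the crucial observation is that $(Y, S|_{Y\times Y})$ is itself a finite non-degenerate symmetric solution of the QYBE, so its structure monoid $\widetilde{M}_Y$ is a Garside monoid by Theorem~\ref{thm_garsidetableau_structuregp2}. Using~(i), the natural map $\widetilde{M}_Y \to M_Y$ is an isomorphism: it is surjective by construction, and injective because any chain of rewrites in~$M$ between two words over~$Y$ stays over~$Y$ and so uses only relations internal to $\widetilde{M}_Y$. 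Therefore $M_Y$ is a Garside monoid with atom set~$Y$, and the right lcm~$t$ of $X_\ell(s) \subseteq Y$ exists in~$M_Y$. In~$M$, this same~$t$ is a common right multiple of $X_\ell(s)$, and since by Proposition~\ref{prop_simple_lcm} $s$ is their right lcm in~$M$, we obtain $t = s\,r$ in~$M$; since $t \in M_Y$, part~(i) forces $s \in M_Y$. Specializing to $X_\ell(s) = Y$ yields $\delta \in M_Y$.

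For part~(iii), the inclusion $M_Y \subseteq M \cap G_Y$ is immediate. For the reverse, since $M_Y$ is Garside it satisfies Ore's conditions, so every element of~$G_Y$ has the form $uv^{-1}$ with $u,v \in M_Y$. Given $s \in M \cap G_Y$, write $s = uv^{-1}$ in~$G$; then $sv = u$ holds in~$G$, hence in~$M$ since $M$ embeds in~$G$. Thus $s$ left-divides $u \in M_Y$, and part~(i) forces $s \in M_Y$.

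The main obstacle I expect is the reverse direction of~(ii), namely identifying $M_Y$ with the structure monoid of the restricted solution $(Y, S|_{Y\times Y})$ so as to invoke Theorem~\ref{thm_garsidetableau_structuregp2}. This is where part~(i) does its real work; once $M_Y$ is known to be Garside, the lcm and Ore arguments needed for the remainder of~(ii) and for~(iii) are fairly routine.
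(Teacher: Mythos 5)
Your proof is correct, and for part~(i) it is essentially the paper's argument: the authors' ``key argument'' is exactly your observation that invariance (together with involutivity of~$S$) forces any defining relation with one side written over~$Y$ to have both sides written over~$Y$, so that rewriting preserves words over~$Y$. Where you diverge is in the mechanism for the reverse implication of~(ii) and for~(iii). The paper stays inside the ambient monoid~$M$ and invokes the \emph{reversing process} of Dehornoy: the right (or left) lcm of a subset of~$Y$ is computed by reversing, the computation never leaves~$Y$ by the key argument, hence $s\in M_Y$; similarly, double reversing applied to a word over~$Y^{\pm 1}$ representing an element of $M\cap G_Y$ yields a positive word over~$Y$. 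You instead first identify $M_Y$ with the structure monoid of the restricted solution $(Y,S|_{Y\times Y})$ --- using that invariant subsets of a finite solution are automatically non-degenerate, so Theorem~\ref{thm_garsidetableau_structuregp2} applies --- and then harvest the Garside/Ore structure of~$M_Y$: existence of lcm's in~$M_Y$ gives (ii), and the fraction form $uv^{-1}$ plus the embedding $M\hookrightarrow G$ gives~(iii). Your route is slightly longer but buys more: it establishes along the way that $M_Y$ is itself a Garside monoid isomorphic to the structure monoid of the restricted solution, a fact the paper only obtains later (via the cited lemma on parabolic submonoids) after proving that $\delta$ is balanced. The one point you should make explicit is the citation of the finiteness result of Etingof--Schedler--Soloviev guaranteeing that $(Y,S|_{Y\times Y})$ is non-degenerate; without it Theorem~\ref{thm_garsidetableau_structuregp2} cannot be invoked.
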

\begin{proof}
The following is the key argument in the proof: if $Y$ is invariant,  then $S(Y,Y)$ is included in $Y\times Y$, which means that the defining relations involving two generators from $Y$ in one-hand side of the relation have necessarily the form $y_{1}y_{2}=y_{3}y_{4}$, where $y_{i} \in Y$, $1 \leq i \leq 4$. Let $s$ belong to~$M_Y$, then Point~(i) follows directly from the above. Let us prove~(ii). If additionally $s$ belongs to $\Div{\Delta}$, then  $X_{\ell}(s)\subseteq Y$ and $X_{r}(s)\subseteq Y$. Conversely, if~$s \in \Div{\Delta}$, $X_{\ell}(s)\subseteq Y$ and $X_{r}(s)\subseteq Y$ then using the same argument and the \emph{reversing process}~(see \cite{deh_francais}) to compute the right  (or left) lcm of a subset of $Y$, we have $s\in M_Y$. It remains to show that~(iii) holds. Clearly, $M_Y$ is included in~$M \cap G_{Y}$ and by applying the double reversing process on a word on~$Y^{\pm 1}$ that represents an element in $M \cap G_{Y}$, we  obtain an element in $M$, whose letters belong to~$Y$  (still by the same argument).
\end{proof}

We recall that if $s$ is a balanced element in~$\Div{\Delta}$, then its \emph{support}~$\operatorname{Supp}(s)$ is defined to be the set~$X\cap \Div{s}$. It is shown in~\cite{godelle} that the atoms set of a standard parabolic subgroup $G_\delta$ is~$\operatorname{Supp}(\delta)$.
\begin{lem}\label{lem_invariant_balanced_2}
The element $\delta$ lies in~$\Div{\Delta}$, is balanced, and~$\operatorname{Supp}(\delta)=Y$.
\end{lem}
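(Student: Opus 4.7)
The plan is to realize $\delta$ as the Garside element of the submonoid $M_Y$ and leverage this to extract the three claims. The key observation is that the invariance hypothesis $S(Y \times Y) \subseteq Y \times Y$ makes the restricted pair $(Y, S|_{Y\times Y})$ itself a non-degenerate symmetric solution, and its structure monoid is then Garside by Theorem~\ref{thm_garsidetableau_structuregp2}; showing that this structure monoid coincides with $M_Y$ and that its Garside element is $\delta$ lets us inherit balancedness and the support description.

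First I would establish $\delta \in \Div{\Delta}$: by Proposition~\ref{prop_simple_lcm}(i), $\Delta$ is a common right multiple of $X \supseteq Y$, so the right lcm $\delta$ of $Y$ in $M$ left-divides $\Delta$. Uniqueness in Proposition~\ref{prop_simple_lcm}(iii) then forces $X_{\ell}(\delta) = Y$, and Lemma~\ref{lem_invariant_balanced}(ii) gives $\delta \in M_Y$. Next I would identify $M_Y$ with the structure monoid $M'$ of $(Y, S|_{Y\times Y})$: the relations of $M'$ are exactly those relations of $M$ whose four letters all lie in $Y$ (thanks to invariance), and by Lemma~\ref{lem_invariant_balanced}(i) any rewriting of one word on $Y$ to another can use only such relations, so the natural map $M' \to M_Y$ is an isomorphism. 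Hence $M_Y$ is Garside by Theorem~\ref{thm_garsidetableau_structuregp2}; Proposition~\ref{prop_simple_lcm}(i) applied to $M_Y$ identifies its Garside element as the right lcm of $Y$ in $M_Y$, and comparing divisibilities (using that $\delta\in M_Y$ is itself a right lcm of $Y$ in the larger monoid $M$) forces $\delta$ to equal this Garside element of $M_Y$.

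Once $\delta$ is the Garside element of $M_Y$ it is balanced in $M_Y$, and balancedness lifts to $M$ as follows: if $d$ left-divides $\delta$ in $M$, writing $\delta = de$ and invoking Lemma~\ref{lem_invariant_balanced}(i) puts both $d$ and $e$ in $M_Y$, and similarly for right divisors, so the sets of left and right divisors of $\delta$ in $M$ coincide with those computed in $M_Y$ and therefore agree with each other. For the support, the inclusion $Y \subseteq \operatorname{Supp}(\delta)$ is immediate since each $y \in Y$ left-divides $\delta$; conversely, if $x \in X \cap \Div{\delta}$ then $x \in M_Y$ by Lemma~\ref{lem_invariant_balanced}(i), and since the defining relations of $M$ preserve length, the atom $x$ must already be a letter of $Y$, yielding $\operatorname{Supp}(\delta) = Y$.

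The main obstacle is the balancedness claim. As emphasized in the remark just preceding the statement, $X_{\ell}(\delta) = X_{r}(\delta)$ alone does not imply balancedness (witness $x_{1}^{3}$ in Example~\ref{example_struct_gp}), so the argument must genuinely exploit invariance. The bridge is precisely the identification of $M_Y$ as a Garside submonoid arising from a sub-solution: without invariance, the submonoid generated by $Y$ need not admit the length-$2$ presentation of a structure monoid, and $\delta$ would not automatically inherit the Garside (hence balanced) role.
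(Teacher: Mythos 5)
Your argument is correct, but it takes a genuinely different route from the paper. The paper stays entirely inside $M$: having shown $X_\ell(\delta)=X_r(\delta)=Y$ via Proposition~\ref{prop_simple_lcm}(iii), it takes an arbitrary left divisor $u$ of $\delta$, places it in $\Div{\Delta}\cap M_Y$ by Lemma~\ref{lem_invariant_balanced}(i), deduces $X_r(u)\subseteq Y=X_r(\delta)$ from Lemma~\ref{lem_invariant_balanced}(ii), and concludes that $u$ right-divides $\delta$ because $u$ is the left lcm of $X_r(u)$ while $\delta$ is a common left multiple of $Y\supseteq X_r(u)$; balancedness follows by symmetry. You instead realize $M_Y$ as the structure monoid of the sub-solution $(Y,S|_{Y\times Y})$ and import balancedness from the Garside structure of that monoid via Theorem~\ref{thm_garsidetableau_structuregp2}(ii). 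Your route buys more (it yields at once that $M_Y$ is a Garside monoid with Garside element $\delta$, which the paper only recovers afterwards by citing the lemma from~\cite{godelle} once parabolicity is established, and it makes the role of invariance transparent), but it costs two inputs you should make explicit: first, that $(Y,S|_{Y\times Y})$ is a non-degenerate symmetric solution requires the Etingof--Schedler--Soloviev result that invariant subsets of \emph{finite} solutions are non-degenerate (quoted in Section~2 of the paper but not needed in the paper's own proof here); second, the isomorphism $M'\to M_Y$ needs slightly more than the statement of Lemma~\ref{lem_invariant_balanced}(i) --- namely that every elementary rewriting step in a derivation between two words on $Y$ uses a relation all four of whose letters lie in $Y$ --- which is really the ``key argument'' from that lemma's proof rather than its conclusion. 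With those two points spelled out, the proof is complete.
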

\begin{proof}
The right lcm of $Y$ is~$\delta$ and $Y\subseteq X$, so  by Proposition~\ref{prop_simple_lcm}(i), the element~$\delta$ is a left divisor of $\Delta$ and $\delta$ lies in~$\Div{\Delta}$. From lemma~\ref{lem_invariant_balanced}~(ii), the equality~$X_\ell(\delta) = Y$ implies that  $\delta$ lies in~$M_Y$ and~$X_r(\delta)\subseteq Y$. But the sets~$X_\ell(\delta)$ and~$X_r(\delta)$ have the same cardinality by Proposition~\ref{prop_simple_lcm}(iii), so we have~$X_\ell(\delta) = X_r(\delta) = Y$. Now, let~$u$ be a left divisor of~$\delta$. We show that~$u$ is also a  right divisor of~$\delta$. Since~$\delta$ belongs to~$\Div{\Delta}\cap M_Y$, it follows from lemma~\ref{lem_invariant_balanced}~(i) that~$u$ lies in~$\Div{\Delta}\cap M_Y$. So, by Lemma~\ref{prop_simple_lcm}(ii), we get~$X_r(u) \subseteq Y=X_r(\delta)$. Therefore, $u$ is a right divisor of~$\delta$.  Similarly every right divisor of $\delta$ is a left divisor of $\delta$, and $\delta$ is balanced. At last, we have~$\operatorname{Supp}(\delta)=Y$.
\end{proof}
\begin{lem} \label{lem_invariant_condition_parabolic}
 $\Div{\delta}=$ $\Div{\Delta} \cap M_{Y}$.
\end{lem}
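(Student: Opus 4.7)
The plan is to prove the two inclusions separately, leaning on the work already done in Lemmas~\ref{lem_invariant_balanced} and \ref{lem_invariant_balanced_2} together with Proposition~\ref{prop_simple_lcm}.

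For the inclusion $\Div{\delta}\subseteq \Div{\Delta}\cap M_Y$, I would argue as follows. By Lemma~\ref{lem_invariant_balanced_2}, $\delta$ belongs to $\Div{\Delta}$, so $\delta$ left-divides $\Delta$; hence every left divisor of $\delta$ is a left divisor of $\Delta$, i.e.\ lies in $\Div{\Delta}$. On the other hand, $\delta\in M_Y$ (again by Lemma~\ref{lem_invariant_balanced_2}), so Lemma~\ref{lem_invariant_balanced}(i) implies that every left divisor of $\delta$ lies in $M_Y$. This settles one inclusion.

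For the reverse inclusion $\Div{\Delta}\cap M_Y\subseteq \Div{\delta}$, take $u\in\Div{\Delta}\cap M_Y$. By Proposition~\ref{prop_simple_lcm}(ii), $u$ is the right lcm of the subset $X_\ell(u)\subseteq X$, and Lemma~\ref{lem_invariant_balanced}(ii) gives $X_\ell(u)\subseteq Y$. Since $\delta$ is by definition the right lcm of $Y$, every element of $Y$ left-divides $\delta$; in particular every element of $X_\ell(u)$ left-divides $\delta$, so their right lcm $u$ left-divides $\delta$. Therefore $u$ is a left divisor of $\delta$, i.e.\ $u\in\Div{\delta}$. Since $\delta$ is balanced by Lemma~\ref{lem_invariant_balanced_2}, this is equivalent to $u$ being any divisor of $\delta$, which is what is needed.

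I do not expect a real obstacle here: both directions reduce to a one-line application of the preceding lemmas, with the only subtlety being the use of the lcm characterization in Proposition~\ref{prop_simple_lcm}(ii) to pass from a divisor $u$ of $\Delta$ lying in $M_Y$ to the containment $X_\ell(u)\subseteq Y$, after which the universal property of $\delta$ as the right lcm of $Y$ delivers $u\mid \delta$ immediately.
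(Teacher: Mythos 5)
Your proof is correct and follows essentially the same route as the paper: the first inclusion comes from $\delta\in\Div{\Delta}\cap M_Y$ together with the closure of $M_Y$ under divisibility (Lemma~\ref{lem_invariant_balanced}), and the reverse inclusion uses Proposition~\ref{prop_simple_lcm}(ii) to write $u$ as the right lcm of $X_\ell(u)\subseteq Y$ and then the universal property of $\delta$ as the right lcm of $Y$. You merely spell out a few steps the paper leaves implicit.
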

\begin{proof} By Lemmas ~\ref{lem_invariant_balanced} and~\ref{lem_invariant_balanced_2}, $\delta$ lies in~$\Div{\Delta} \cap M_{Y}$, so~$\Div{\delta}$ is included in~$\Div{\Delta} \cap M_{Y}$ ({\it cf.} Lemma~\ref{lem_invariant_balanced}(ii)). Conversely, if~$u$ lies in~$\Div{\Delta} \cap M_{Y}$, then~$u$ is the right lcm of~$X_\ell(u)$, which is a subset of $Y$ by  Lemma~\ref{lem_invariant_balanced}(ii). Since~$\delta$ is the right lcm of~$Y$, the element $u$ belongs to~$\Div{\delta}$.
\end{proof}
We can now state and prove the main result of this section:
\begin{prop}\label{thm_invariant_parabolic}
  Under the general hypothesis and notations of this section, the subgroup~$G_{Y}$, generated by $Y$, is a  standard parabolic subgroup of $G$.
\end{prop}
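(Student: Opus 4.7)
The plan is to verify directly from Definition~\ref{def:parabsubgrp} that $\delta$ witnesses $M_Y$ as a standard parabolic submonoid of $M$; then by definition of a standard parabolic subgroup, $G_Y$ (generated by $M_Y$) will be a standard parabolic subgroup of $G$. Almost all of the technical work has been done in the preceding three lemmas; the proposition is essentially their packaging.

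First, I would observe that $\delta$ satisfies the two hypotheses required of the witnessing element: by Lemma~\ref{lem_invariant_balanced_2} the element $\delta$ lies in $\operatorname{Div}(\Delta)$ and is balanced. Second, the condition $M_Y \cap \operatorname{Div}(\Delta) = \operatorname{Div}(\delta)$ required in Definition~\ref{def:parabsubgrp}(i) is exactly Lemma~\ref{lem_invariant_condition_parabolic}. So the only thing that remains to check is that $\operatorname{Div}(\delta)$ generates the submonoid $M_Y$.

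For this last point I would argue by mutual inclusion at the level of the submonoid generated by $\operatorname{Div}(\delta)$. On one hand, each $y \in Y$ divides the right lcm of $Y$, so $Y \subseteq \operatorname{Div}(\delta)$; since $Y$ generates $M_Y$ by definition, the submonoid generated by $\operatorname{Div}(\delta)$ contains $M_Y$. On the other hand, $\operatorname{Div}(\delta) \subseteq \operatorname{Div}(\Delta) \cap M_Y \subseteq M_Y$, again by Lemma~\ref{lem_invariant_condition_parabolic}, so the submonoid generated by $\operatorname{Div}(\delta)$ is contained in $M_Y$. Hence $\operatorname{Div}(\delta)$ generates $M_Y$, and $M_Y$ is a standard parabolic submonoid with witnessing element $\delta$. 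Applying Definition~\ref{def:parabsubgrp}(ii), the subgroup $G_Y$ is then a standard parabolic subgroup of $G$.

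There is no real obstacle: the substantive content sits inside Lemmas~\ref{lem_invariant_balanced}, \ref{lem_invariant_balanced_2} and~\ref{lem_invariant_condition_parabolic}, where the invariance of $Y$ was used to transport balanced-ness and divisibility information between $Y$, $\delta$ and $\operatorname{Div}(\Delta)$. The proposition itself is a short verification that $\delta$ fits the definition, so the write-up should be only a few lines.
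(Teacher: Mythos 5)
Your proof is correct and follows the same route as the paper: both invoke Lemma~\ref{lem_invariant_balanced_2} for the balancedness of $\delta$ and Lemma~\ref{lem_invariant_condition_parabolic} for the identity $\operatorname{Div}(\delta)=\operatorname{Div}(\Delta)\cap M_Y$, then appeal to Definition~\ref{def:parabsubgrp}. Your explicit check that $\operatorname{Div}(\delta)$ generates $M_Y$ (via $Y\subseteq\operatorname{Div}(\delta)\subseteq M_Y$) is a detail the paper leaves implicit through the equality $Y=\operatorname{Supp}(\delta)$, and it is a welcome addition.
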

\begin{proof}
By Lemma~\ref{lem_invariant_balanced_2}, $\delta$ is a balanced element in $\Div{\Delta}$, with $Y= \operatorname{Supp}(\delta)$. Moreover, from Lemma~\ref{lem_invariant_condition_parabolic}, $\operatorname{Div}(\delta)=$ $\operatorname{Div}(\Delta) \cap M_Y$. Hence, $G_{Y}$ is a  standard parabolic subgroup of $G$ from Definition~\ref{def:parabsubgrp}.
\end{proof}
Rump proves that every square-free, non-degenerate and symmetric solution $(X,S)$  is decomposable, whenever $X$ is finite \cite{rump}. So, from Proposition \ref{thm_invariant_parabolic}, the structure group of a square-free solution has standard non-trivial parabolic subgroups. Moreover,  in a square-free solution each set $\{x\}$ with  $x\in X$ is an invariant subset, since $S(x,x)=(x,x)$. Square-free solutions provide examples of solutions in which there exist invariant subsets~$Y$ such that~$X \setminus Y$ is not invariant.

If we consider that~$Y_{1}$, $Y_{2}$,.., $Y_{k}$ are invariant subsets of $(X,S)$ that correspond to indecomposable solutions (in other words that $Y_{1}$, $Y_{2}$,.., $Y_{k}$ are the minimal under inclusion among  the invariant sets in the decomposition of $(X,S)$ to indecomposable solutions), then each set $Y_{1}$, $Y_{2}$,.., $Y_{k}$ generates a standard parabolic subgroup. Furthermore, these parabolic subgroups are  $\Delta$-pure Garside, since  a solution is indecomposable if and only if its structure group is $\Delta$-pure Garside  \cite[5.3]{chou_art}. So, we get that $G$ is the crossed product of  $\Delta$-pure Garside parabolic subgroups generated by $Y_{1}$, $Y_{2}$,.., $Y_{k}$,  using \cite[Prop.4.5]{picantin}.
\subsection{From parabolic subgroups to invariant sets}
As in the previous section, for all this section we assume $X$ is a finite set, and $(X,S)$ is a non-degenerate symmetric set-theoretical solution of the QYBE. For short, we write $G$ for $G(X,S)$. We denote by $M$ the submonoid of $G$ generated by $X$, and by $\Delta$ the Garside element of $M$. We recall that we denote by~$\Div{\Delta}$ the set of divisors of~$\Delta$. We fix a balanced element~$\delta$ in $\Div{\Delta}$ such that the subgroup $G_{\delta}$, generated by~$\Div{\delta}$, is a non-trivial standard parabolic subgroup of $G$. We set $M_\delta = M\cap G_\delta$, which is equal to the submonoid of $M$ generated by~$\operatorname{Supp}(\delta)$. Our objective here is to prove that ~$\operatorname{Supp}(\delta)$ is an invariant subset of $X$.

\begin{lem}\label{lem_delta_lcm_support}
The balanced element~$\delta$ is the right lcm and the left lcm of $\operatorname{Supp}(\delta)$. In particular, $\delta$ is \emph{the} Garside element of the Garside monoid~$M_\delta$.
\end{lem}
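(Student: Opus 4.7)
The plan is to leverage Proposition~\ref{prop_simple_lcm}: since $\delta$ lies in~$\Div{\Delta}$, there are unique subsets $X_\ell(\delta)$ and $X_r(\delta)$ of $X$ such that $\delta$ is the right lcm of $X_\ell(\delta)$ and the left lcm of $X_r(\delta)$, and these two subsets have the same cardinality. The core of the proof is to identify both of them with $\operatorname{Supp}(\delta)$.

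For the inclusion $X_\ell(\delta)\subseteq \operatorname{Supp}(\delta)$, observe that every element of $X_\ell(\delta)$ is an element of $X$ that is a left divisor of its right lcm~$\delta$; because $\delta$ is balanced, this element also lies in $\Div{\delta}$, hence in $X\cap \Div{\delta}=\operatorname{Supp}(\delta)$. The same argument, using right divisibility, gives $X_r(\delta)\subseteq \operatorname{Supp}(\delta)$. Conversely, every element of $\operatorname{Supp}(\delta)$ is a (left) divisor of $\delta$, so the right lcm~$\delta'$ of $\operatorname{Supp}(\delta)$ is itself a left divisor of~$\delta$; but since $X_\ell(\delta)\subseteq\operatorname{Supp}(\delta)$, the right lcm~$\delta$ of $X_\ell(\delta)$ must in turn left-divide~$\delta'$. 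By antisymmetry of left-divisibility we get $\delta=\delta'$, i.e.\ $\delta$ is the right lcm of $\operatorname{Supp}(\delta)$; the symmetric argument yields that $\delta$ is also the left lcm of $\operatorname{Supp}(\delta)$.

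For the \emph{In particular} claim, the Godelle lemma recalled in Section~$3$ already guarantees that $M_\delta$ is a Garside monoid and that $\delta$ is a Garside element of it. To see that it is \emph{the} Garside element (i.e.\ the minimal one, as defined earlier in the paper), note that the atoms of $M_\delta$ coincide with~$\operatorname{Supp}(\delta)$, and any Garside element of $M_\delta$ admits every atom as a (left) divisor, hence is a right multiple of their right lcm; by the first part that right lcm is exactly~$\delta$, so $\delta$ is minimal among Garside elements of $M_\delta$.

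The main obstacle I anticipate is purely bookkeeping: one has to be careful to switch between left- and right-divisibility at the correct moments, using the balancedness of~$\delta$ to move freely between them, and to invoke the uniqueness part of Proposition~\ref{prop_simple_lcm} only when working inside~$\Div{\Delta}$. Once Proposition~\ref{prop_simple_lcm} is in hand, the rest is a short lcm comparison.
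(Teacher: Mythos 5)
Your argument is correct and follows the same route the paper takes: the paper simply declares $X_\ell(\delta)=X_r(\delta)=\operatorname{Supp}(\delta)$ to be immediate from the balancedness of $\delta$ (citing \cite{godelle}), and your lcm-comparison via Proposition~\ref{prop_simple_lcm} is exactly the verification being elided. Your extra paragraph identifying $\delta$ as \emph{the} (minimal) Garside element of $M_\delta$ is also sound and slightly more careful than the paper, which leaves that point to the earlier lemma quoted from \cite{godelle}.
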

\begin{proof}
It is immediate that $X_\ell(\delta) = X_r(\delta) = \operatorname{Supp}(\delta)$ since $\delta$ is balanced ({\it cf.} see also \cite{godelle}).

\end{proof}

\begin{prop}\label{thm_parabolic_invariant}
Under the general hypothesis and notations of this section, $\operatorname{Supp}(\delta)$ is an invariant subset of $X$.
\end{prop}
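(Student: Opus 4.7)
The plan is to show that for any $x,y \in Y := \operatorname{Supp}(\delta)$, both coordinates of $S(x,y) = (g_x(y), f_y(x))$ lie in $Y$. The pivotal identity is the defining relation $xy = g_x(y) f_y(x)$ holding in $M$. Note that $xy \in M_\delta$ automatically, since $x,y \in Y$ and $M_\delta$ is generated by $Y$, so the whole task boils down to promoting this to $xy \in \Div{\delta}$ and then reading off the two atomic factors.

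The first step is to show $xy \in \Div{\Delta}$. If $g_x(y) = x$, the relation combined with left-cancellativity in $M$ forces $f_y(x) = y$, so $S(x,y) = (x,y) \in Y \times Y$ and we are done. Otherwise $x$ and $g_x(y)$ are two distinct atoms, each a left divisor of the length-$2$ element $xy$. By Proposition~\ref{prop_simple_lcm}(iii) the right lcm of $\{x, g_x(y)\}$ has length $2$, and, being a left divisor of their common right multiple $xy$, it must equal $xy$. Hence $xy \in \Div{\Delta}$. Combining this with $xy \in M_\delta$ and the defining identity $M_\delta \cap \Div{\Delta} = \Div{\delta}$ of a standard parabolic submonoid (Definition~\ref{def:parabsubgrp}), we conclude $xy \in \Div{\delta}$.

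From $xy \in \Div{\delta}$ the coordinate $g_x(y)$ is treated immediately: it is an atom left-dividing $\delta$, hence $g_x(y) \in X \cap \Div{\delta} = Y$. The coordinate $f_y(x)$ requires slightly more care, because in the relation it only appears as a \emph{right} factor. Here I would invoke the hypothesis that $\delta$ is balanced: left divisors of $\delta$ coincide with right divisors, so $xy$ also right-divides $\delta$, and hence any right divisor of $xy$ is a right divisor of $\delta$. In particular the atom $f_y(x)$ lies in $X \cap \Div{\delta} = Y$, completing the proof.

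The main obstacle I anticipate is precisely this left/right asymmetry: $g_x(y)$ and $f_y(x)$ sit on opposite sides of the relation $xy = g_x(y) f_y(x)$, so the one-line argument used for $g_x(y)$ does not transfer verbatim to $f_y(x)$. Passing between the two requires one to genuinely use the balancedness of $\delta$, which is the analogue of Lemma~\ref{lem_invariant_balanced_2} from the previous section. A secondary, minor issue is the degenerate case $g_x(y) = x$, where the right-lcm argument collapses (one cannot invoke two distinct atoms) and one must instead fall back on cancellativity to recover $S(x,y)=(x,y)$ directly.
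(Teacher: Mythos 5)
Your proof is correct, but it reaches the conclusion by a different chain of lemmas than the paper. Both arguments start identically: the defining relation $xy = g_x(y)f_y(x)$ exhibits $g_x(y)$ as a left divisor and $f_y(x)$ as a right divisor of $xy \in M_\delta$, and one must show these atoms land in $\operatorname{Supp}(\delta)$. The paper finishes in one line by citing \cite[Prop.~2.5]{godelle}: a standard parabolic submonoid is closed under left and right divisibility in $M$, so $g_x(y)$ and $f_y(x)$ lie in $M_\delta$ and, being atoms, in its atom set $\operatorname{Supp}(\delta)$. You instead prove the stronger statement $xy \in \Div{\delta}$, by identifying $xy$ with the right lcm of the two distinct atoms $x$ and $g_x(y)$ (so $xy \in \Div{\Delta}$ by Proposition~\ref{prop_simple_lcm}) and then applying the trace condition $M_\delta \cap \Div{\Delta} = \Div{\delta}$ from Definition~\ref{def:parabsubgrp}; the balancedness of $\delta$ then resolves the left/right asymmetry you correctly flag, placing both the left divisor $g_x(y)$ and the right divisor $f_y(x)$ in $X \cap \Div{\delta} = \operatorname{Supp}(\delta)$. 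Your route is more self-contained --- it uses only facts stated in this paper together with the grading of $M$ by word length --- at the cost of a case split (the degenerate case $g_x(y)=x$) that the paper's argument never needs. One small citation point: the fact that the right lcm of two distinct atoms has length $2$ appears in the proof of Proposition~\ref{prop_simple_lcm}(iii) rather than in its statement, but it is indeed available there.
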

\begin{proof}
Since $X$ is finite, from \cite{etingof} it is enough to show that $Y= \operatorname{Supp}(\delta)$ is invariant, that is $S(Y,Y) \subseteq (Y,Y)$. Let $y, y'$ belong to~$Y$, and assume  that  $S(y,y')=(x,x')$, that is $x$ is a left divisor of $yy'$ and $x'$ is a right divisor of $yy'$. From \cite[Prop.2.5]{godelle}, $M_\delta$ is closed under left and right divisibility, so $x,x'$ lie in $M_\delta$. As $x$ and $x'$ are atoms, they belong to $Y$. As a consequence, we get~$S(Y,Y) \subseteq (Y,Y)$.
\end{proof}

Gathering Propositions~\ref{thm_invariant_parabolic} and~\ref{thm_parabolic_invariant}, we get Theorem~\ref{theo:theoprincip}.

 We now extend the result of Proposition~\ref{thm_parabolic_invariant} to parabolic subgroups in general, that is not necessarily \emph{standard} parabolic subgroups. If $gG_{\delta}g^{-1}$ is the conjugate of a standard parabolic subgroup $G_{\delta}$, then $gG_{\delta}g^{-1}$ is generated by the set $gYg^{-1}$, where $Y$ generates $G_{\delta}$ and $Y$ is an invariant subset of $(X,S)$ from Proposition~\ref{thm_parabolic_invariant}. We recall that two solutions  $(X,S)$ and  $(X',S')$  are said to be \emph{isomorphic} if there exists a bijection $\phi: X \rightarrow X'$ which maps $S$ to $S'$, that is $S'(\phi(x),\phi(y))=(\phi(S_{1}(x,y)),\phi(S_{2}(x,y)))$.
\begin{prop}
Consider the general hypothesis and notations of this section. Let $gG_{\delta}g^{-1}$ be a non-trivial parabolic subgroup of $G$, where $g$ belongs to $G$. Then, $g\operatorname{Supp}(\delta)g^{-1}$ is an invariant subset of a set-theoretical $(X',S')$ which is isomorphic to $(X,S)$.
 \end{prop}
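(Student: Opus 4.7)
The plan is to transport the solution along conjugation by~$g$, so that the desired invariance becomes simply the image of the one already established in Proposition~\ref{thm_parabolic_invariant}. Set $X' = \{gxg^{-1} \mid x \in X\}$, regarded as a subset of $G$, and define the map $\phi : X \to X'$ by $\phi(x) = gxg^{-1}$. The first thing to check is that $\phi$ is a bijection: surjectivity is automatic from the definition of $X'$, and injectivity follows from the fact that $X$ embeds into $G$ (since $M$ embeds into its group of fractions~$G$ by the Ore conditions recalled in Section~$3$), so distinct elements of $X$ remain distinct after conjugation. Now transport the solution structure by setting
\[
S'(\phi(x),\phi(y)) \;=\; \bigl(\phi(S_1(x,y)),\, \phi(S_2(x,y))\bigr),
\]
where we write $S(x,y) = (S_1(x,y),S_2(x,y))$.

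The next step is to check that $(X',S')$ is a non-degenerate symmetric set-theoretical solution of the QYBE, isomorphic to $(X,S)$. The isomorphism statement is tautological by the defining formula of $S'$. Non-degeneracy, involutivity, and the braid relation for $S'$ follow coordinate-by-coordinate from the corresponding properties of $S$, using that $\phi$ is a bijection; for instance the maps $f'_{\phi(x)}$ and $g'_{\phi(x)}$ associated to $S'$ are the conjugates of $f_x$ and $g_x$ by $\phi$, hence are bijections of $X'$.

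The conclusion is then immediate. By Proposition~\ref{thm_parabolic_invariant}, $Y := \operatorname{Supp}(\delta)$ is an invariant subset of $(X,S)$, i.e.\ $S(Y\times Y) \subseteq Y \times Y$. Applying $\phi$ coordinate-wise and using the definition of $S'$, this gives $S'(\phi(Y)\times \phi(Y)) \subseteq \phi(Y)\times \phi(Y)$. But $\phi(Y) = g\operatorname{Supp}(\delta)g^{-1}$, whence the claim.

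The only mildly delicate point is the injectivity of $\phi$, i.e.\ the fact that conjugation by $g$ does not collapse the set of atoms. This rests on the embedding $M \hookrightarrow G$; once this is in hand the whole argument is a formal transport of structure along the bijection $\phi$, and no further combinatorics of the Garside structure is needed.
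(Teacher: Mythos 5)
Your proposal is correct and follows essentially the same route as the paper: both define $X'=gXg^{-1}$ with $S'$ obtained by transporting $S$ along $x\mapsto gxg^{-1}$, observe the resulting pair is an isomorphic solution, and deduce invariance of $g\operatorname{Supp}(\delta)g^{-1}$ from Proposition~\ref{thm_parabolic_invariant}. You merely spell out the ``direct computation'' that the paper leaves implicit, and the paper adds an extra closing remark (that the structure group of $(X',S')$ is a subgroup of $G$ isomorphic to $G$) which is not needed for the stated claim.
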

\begin{proof}
We define $(X',S')$ in the following way:\\ $X'=gXg^{-1}$ and $S'(gx_{i}g^{-1},gx_{j}g^{-1})=(gg_{i}(j)g^{-1},gf_{j}(i)g^{-1})$. \\
 From the definition,  $(X,S)$ and $(X',S')$ are isomorphic and a direct computation shows  that $g\operatorname{Supp}(\delta)g^{-1}$ is an invariant subset of $(X',S')$. If  $(X,S)$ and $(X',S')$ are isomorphic, then  $G$ and the structure group $G'$ of $(X',S')$ are isomorphic groups
(see for example \cite{chou}). Moreover, $gXg^{-1}\subseteq G$, so   $G'$ is a subgroup of $G$ that is isomorphic to $G$.
\end{proof}

\section{Garside subgroups of the structure group}
In the previous section, we investigated the connection between the invariant subsets of a set-theoretical solution and the standard parabolic subgroups of its structure group, equipped with is canonical Garside structure.  In~\cite{godelle}, the second author introduced a more general family of subgroups of a Garside group, namely the Garside subgroups. In this section, we study, in the case of a Garside group that arises as the structure group of a set-theroretical solution of the QYBE, how these subgroups can also be associated with set-theoretical solutions. Morover, we extend the notion of a decomposable solution, and explain why this extention could be useful in order to study the classification problem.

\subsection{The set-theoretical solution associated with a Garside subgroup}
Parabolic subgroups of Garside groups are \emph{Garside subgroups} as introduced in~\cite{godelle}. So, invariant subsets provide Garside subgroups. A question that arises naturally is whether the converse is true, that is whether a Garside subgroup is necessarily a parabolic one. The answer is negative as shown in Example \ref{example_struct_gp_indecomposable_2}. Indeed, a Garside subgroup may  not be generated by a subset of atoms.
\begin{defn}\label{defn_Garside_subgp} Let $M$ be a Garside monoid with $\Delta$ as Garside element. Let~$N$ be a submonoid of~$M$.\\(i) \cite[Prop.~1.6]{godelle} We say that $N$ is a \emph{Garside submonoid} of $M$ if it is generated by a non-empty subset~$D$ of $\Div{\Delta}$ which is a sublattice of $\Div{\Delta}$ for left and right divisibility, which is closed by complement for left and right divisibility, and such that for every $x,y\in D$, the left gcd and the right gcd of $xy$ and $\Delta$ in $M$ belong to $D$. In this case, we say that the subgroup of $G(M)$ generated by $N$ is a \emph{Garside subgroup} of $G(M)$.\\
(ii) we say that the \emph{Garside submonoid}~$N$ of $M$ is \emph{atomic} when its atom set is a subset of the atom set of $M$.
 \end{defn}

\begin{ex} \label{example_struct_gp_indecomposable_2} Consider the group~$G(X,S)$ where $X = \{x_1,x_2,x_3,x_4\}$ and the defining relations are $$\begin{array}{ccccc}
  x^{2}_{1}=x^{2}_{2};&x_{1}x_{2}=x_{3}x_{4};&
x_{1}x_{3}=x_{4}x_{2};\\ x^{2}_{3}=x^{2}_{4};&
x_{2}x_{4}=x_{3}x_{1};&
x_{2}x_{1}=x_{4}x_{3}.
\end{array}$$ The group~$G(X,S)$ has no proper standard parabolic subgroup. However, the subgroups generated by the sets $\{x_1\}$, $\{x_1^2\}$, $\{x_1,x_2\}$ and $\{x^2_1,x_3^2\}$ are examples of Garside subgroups.
\end{ex}
Since Garside subgroups are not necessarily parabolic, they may not correspond to a set-theoretical solution of the QYBE. However, the following result proves that under a simple necessary condition, a Garside subgroup is naturally associated to a set-theoretical solution of the QYBE.
\begin{prop}\label{thm_garsidesubgp_solution}
 Let $X$ be a finite set, and $(X,S)$ be a non-degenerate, symmetric set-theoretical solution of the QYBE. Assume $H$ is a Garside subgroup of $G(X,S)$ such that its atoms set~$X_H$ is closed under right complement in the Garside monoid~$M(X,S)$. Then there exists a uniquely well-defined bijective map~$S_H: X_H\times X_H\to X_H\times X_H$ such that the pair~$(X_H,S_H)$ is a non-degenerate symmetric set-theoretical solution and $G(X_H,S_H)$ is isomorphic to $H$.
\end{prop}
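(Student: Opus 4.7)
The plan is to apply Theorem~\ref{thm_garsidetableau_structuregp2}(i) to a carefully chosen presentation of the atomic Garside submonoid $N$ of $M(X,S)$ whose enveloping group is $H$. Set $n' = \#X_H$ and let $R_H$ be the set of defining relations $x_i x_j = x_k x_\ell$ of $M(X,S)$ for which all four letters lie in $X_H$. The closure of $X_H$ under right complement in $M(X,S)$ translates into the statement that $X_H \times X_H$ is $S$-invariant: whenever $x_i, x_j \in X_H$ and $S(x_i,x_j) = (x_k, x_\ell)$, both $x_k$ and $x_\ell$ lie in $X_H$. Since $N$ is a Garside submonoid of $M(X,S)$ with atom set $X_H$, the reversing process for computing right lcm's of words over $X_H$ stays within $X_H$, so $\langle X_H \mid R_H\rangle$ presents $N$ (no additional relation can hold, because $N$ embeds in $M(X,S)$).

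I would then verify the two hypotheses (a) and (b) of Theorem~\ref{thm_garsidetableau_structuregp2}(i). All relations in $R_H$ are inherited from $R$, so both sides have length two and each word $x_i x_j$ appears at most once in $R_H$. For the count $\#R_H = n'(n'-1)/2$, I would argue via involutivity: $S$ restricts to an involution $S|_{X_H \times X_H}$, and non-trivial relations in $R_H$ correspond bijectively to size-two orbits, while trivial fixed pairs $S(x,y) = (x,y)$ produce no relation. By non-degeneracy of $(X,S)$, for each $x \in X$ there is a unique $y \in X$ with $S(x,y) = (x,y)$; one more application of closure under right complement (to the pair $(x,x)$) shows that this $y$ belongs to $X_H$ whenever $x \in X_H$. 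Hence the fixed-point set of $S|_{X_H \times X_H}$ has exactly $n'$ elements, giving $\#R_H = (n'^2 - n')/2$, as required.

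Theorem~\ref{thm_garsidetableau_structuregp2}(i) now produces a bijection $S_H : X_H \times X_H \to X_H \times X_H$ such that $(X_H, S_H)$ is a non-degenerate symmetric set-theoretical solution of the QYBE whose structure group is presented by $\langle X_H \mid R_H\rangle$, hence isomorphic to $H$. Uniqueness of $S_H$ is automatic: the value $S_H(x,y)$ is forced by $R_H$, being either the unique non-trivial relation $xy = x'y'$ or, failing that, the trivial fixed case $(x,y)$. The main obstacle I anticipate is the precise translation of the abstract "closure under right complement" hypothesis into the concrete invariance $S(X_H \times X_H) \subseteq X_H \times X_H$ together with the preservation of diagonal fixed points; both should fall out of unpacking the definition of right complementation in the length-two Garside setting, but the bookkeeping deserves care, and one must also take care that the Garside submonoid structure on $N$ with atom set $X_H$ is genuinely compatible with this subpresentation rather than requiring any relation outside $R_H$.
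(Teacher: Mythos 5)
Your overall strategy coincides with the paper's: present $H$ (or a group surjecting onto it) by the generating set $X_H$ together with the length-two relations of $M(X,S)$ all of whose letters lie in $X_H$, check conditions (a) and (b) of Theorem~\ref{thm_garsidetableau_structuregp2}(i), and invoke that theorem. However, the step on which your count of $\#R_H$ rests is false. Closure of $X_H$ under right complement does \emph{not} translate into $S(X_H\times X_H)\subseteq X_H\times X_H$. A non-trivial relation $x_ix_j=x_kx_\ell$ realizes the right lcm $x_i\vee x_k$ of the two \emph{first} letters of its two sides; closure under right complement therefore says: if $x_i$ and $x_k$ both lie in $X_H$, then so do the complements $x_j$ and $x_\ell$. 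Your version instead assumes that $x_i$ and $x_j$ (the two letters of \emph{one side}) lie in $X_H$, which is a genuinely different hypothesis. Example~\ref{example_struct_gp_indecomposable_2} of the paper already refutes your claim: $X_H=\{x_1,x_2\}$ is the atom set of a Garside subgroup and is closed under right complement (since $x_1\vee x_2=x_1x_1=x_2x_2$), yet $S(x_1,x_2)=(x_3,x_4)\notin X_H\times X_H$, and the unique $y$ with $S(x_1,y)=(x_1,y)$ is $y=x_4\notin X_H$. So $S$ does not restrict to an involution of $X_H\times X_H$, the diagonal fixed-point argument fails (here $X_H\times X_H$ contains no fixed point of $S$, not $n'=2$ of them), and your formula $(n'^2-n')/2$ is reached from incorrect premises --- it returns the right number only by accident, since in this example one two-element orbit lies inside $X_H\times X_H$ while two further points escape it.

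The correct count, which is what the paper uses (implicitly), comes from parametrizing the relations of $R_H$ by the unordered pairs $\{x,z\}$ of distinct elements of $X_H$: each such pair yields exactly one relation $xy=zt=x\vee z$, with $y,t\in X_H$ precisely by the right-complement hypothesis, and distinct pairs yield distinct relations; hence $\#R_H=n'(n'-1)/2$, and condition (b) is inherited from $R$. Two further points deserve attention. First, closure under \emph{left} complements is also needed; the paper derives it from right-complement closure by observing that the map $(x,z)\mapsto(y,t)$ is injective on pairs of distinct elements of $X_H$ and hence, by finiteness, bijective. Second, your parenthetical ``no additional relation can hold because $N$ embeds in $M(X,S)$'' is not by itself a proof that $\langle X_H\mid R_H\rangle$ presents $N$ (a priori the canonical map from the abstractly presented group onto $H$ could fail to be injective); the paper establishes injectivity via the double reversing process, which is exactly where closure under both complements is used.
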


\begin{proof}
We claim that we  (uniquely) define a bijective map~$S_H: X_H\times X_H\to X_H\times X_H$ by setting~$S_H(x,y) = (z,t)$ for every~$x,y,z,t$ in $X_H$ with $x,z$ distinct and such that $xy = zt = x\lor z$.  By assumption, for every $x,z$ distinct in $X_H$, there exists a unique pair~$y,t$  of distinct elements in $X_H$ such that $xy = zt = x\lor z$, that is the map $(x,z)\mapsto (y,t)$ is injective. From finiteness, it is also bijective and  as a consequence, the set~$X_H$ is closed by left complements. Consider the group~$\tilde{H}$ with the generating set $X_H$ and the defining relations: $xy = zt$ whenever $xy = zt = x\lor z$. The presentation of~$\tilde{H}$ satisfies the properties (a) and (b) stated in Theorem~\ref{thm_garsidetableau_structuregp2}. So, the pair~$(X_H,S_H)$ is a non-degenerate symmetric set-theoretical solution such that  $G(X_H,S_H)\simeq \tilde{H}$.
At last, we show that the surjective morphism~$\varphi_H$ from $\tilde{H}$ to $H$ that sends $x\in X_H$ to itself is an isomorphism.  Let  $w$ be a word in $X_H^{\pm 1}$. Then the double reversing process in $G(X,S)$ (see \cite{deh_francais}) transforms~$w$ into a word that represents the same element in $\tilde{H}$, since $X_H$ is closed by left and right complements. So, if $w$  represents $1$ in $H$, then the double reversing process transforms $w$ into the trivial word in $\tilde{H}$, that is ~$\varphi_H$ is injective.
\end{proof}

As an immediate consequence we get:
\begin{cor}\label{cor_garsidesubgp_solution}
 Let $X$ be a finite set, and $(X,S)$ be a non-degenerate, symmetric set-theoretical solution of the QYBE. Assume $H$ is an atomic Garside subgroup of $G(X,S)$ with atoms set $X_H \subseteq X$. Denote by $S_H$ the restriction of $S$ to $X_H\times X_H$. The pair~$(X_H,S_H)$ is a non-degenerate symmetric set-theoretical solution and $G(X_H,S_H)$ is isomorphic to $H$.
\end{cor}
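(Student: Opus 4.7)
The plan is to derive this corollary directly from Proposition~\ref{thm_garsidesubgp_solution}: one checks that its hypothesis is automatic in the atomic case, and then one identifies the abstract map produced there with the restriction of $S$. The only mildly delicate step is a length argument showing that the right-complements of two atoms in $X_H$ stay among the atoms of $X_H$; everything else is bookkeeping on top of the preceding proposition.

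First, I would verify that $X_H$ is closed under right complement in $M$. Let $N$ be the Garside submonoid of $M$ attached to $H$, so that $X_H$ is the atom set of $N$ and, by atomicity, $X_H \subseteq X$. By Definition~\ref{defn_Garside_subgp}(i), $N$ is closed under right complement in $M$; hence for any distinct $x, z \in X_H$ whose right lcm exists in $M$, both $x \lor z$ and the complements $y, t$ in $xy = zt = x \lor z$ lie in $N$. By Proposition~\ref{prop_simple_lcm}(iii) (combined with the fact that the defining relations of $G(X,S)$ are homogeneous of length two in $X$), the element $x \lor z$ has $M$-length equal to~$2$, so $y$ and $t$ are atoms of $M$. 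Being in $N$, they are atoms of $N$ as well, i.e., $y, t \in X_H$.

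With the hypothesis verified, Proposition~\ref{thm_garsidesubgp_solution} yields a bijection $S_H' : X_H \times X_H \to X_H \times X_H$ such that $(X_H, S_H')$ is a non-degenerate symmetric set-theoretical solution and $G(X_H, S_H') \simeq H$. The construction in that proof characterizes $S_H'(x,y) = (z,t)$ by the relation $xy = zt = x \lor z$ for $x \ne z$, the diagonal case being forced by involutivity and bijectivity. But exactly the same characterization holds for the ambient $S$: writing $S(x,y) = (g_x(y), f_y(x))$, the defining relation of $G(X,S)$ attached to $(x,y)$ is $xy = g_x(y) f_y(x)$, and by Proposition~\ref{prop_simple_lcm} this common value equals the right lcm $x \lor g_x(y)$. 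It follows that $S$ maps $X_H \times X_H$ into itself and that $S|_{X_H \times X_H} = S_H'$, which is precisely the statement of the corollary.
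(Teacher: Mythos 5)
Your proposal is correct and follows essentially the same route as the paper: verify that $X_H$ is closed under right complement (using that the generating sublattice of the Garside submonoid is closed under complements and that the complements of two atoms are again atoms, hence lie in $X_H$), then invoke Proposition~\ref{thm_garsidesubgp_solution}. Your final paragraph explicitly identifying the abstract map $S_H'$ with the restriction of $S$ is a small but welcome addition that the paper leaves implicit.
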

\begin{proof} Using Proposition \ref{thm_garsidesubgp_solution}, we  show that the set $X_H$ is closed under right complement. If $x,z$ belong to~$X_H$ and $S(x,y) = (z,t)$, then $xy = zt = x\lor z$ and $y,t$ belong to~$X$. From the definition of a Garside subgroup (see Defn. \ref{defn_Garside_subgp}), $H$ is generated by a lattice closed under right and left complement. So, $y,t$ belong to this generating lattice and also to ~$X$, so they belong to $X_H$.
\end{proof}
\begin{ex} Consider Example~\ref{example_struct_gp_indecomposable_2}. The atomic Garside subgroup of~$G(X,S)$ generated by $\{x_1,x_2\}$ is isomorphic to the structure group defined by the presentation~$\langle x_1,x_2\mid x_1^2 = x_2^2\rangle$.

\end{ex}

\subsection{Foldable solution}
The notions of an invariant subset and of a decomposable solution have been introduced in~\cite{etingof} as tools to build and classify set-theoretical solutions of the QYBE. In this last section, we want to explain how the notion of a Garside subgroup can be used in the same way. Let us first introduce the notion of a \emph{foldable solution}. We say that a partition $X_1\cup\cdots \cup X_k$ of a set~$X$ is \emph{proper} if each $X_i$ is not empty and $1<k<|X|$.
\begin{defn} Let $X$ be a finite set, and $(X,S)$ be  a non-degenerate, symmetric set-theoretical solution of the QYBE.\\
(i) We say that $(X,S)$ is \emph{foldable} if $X$ has a proper partition~$X_1\cup\cdots\cup X_k$ such that
\begin{enumerate}
\item Every set $X_i$ generates an atomic Garside subgroup of $M(X,S)$ with Garside element~$\Delta_i$;
\item The set $X' = \{\Delta_1,\cdots, \Delta_k\}$ is closed by right and left complements in $M(X,S)$ and is the atom set of a Garside subgroup of $G(X,S)$.
\end{enumerate}
(ii) In this case, let $S': X'\times X'\to X'\times '$ the bijective map induced by $S$. We say that $(X',S')$ is a \emph{folding} of $(X,S)$, or equivalently that $G(X',S')$ is a folding of~$G(X,S)$.\\
(iii) We say that $(X,S)$ is \emph{strongly foldable} when furthermore each $X_i$ generates a standard parabolic subgroup of $G(X,S)$. In this case, we say that~$(X',S')$ is a \emph{ strong folding} of $(X,S)$.
\end{defn}

 One can note that the Garside element of~$M(X',S')$ has to be equal to the Garside element of~$M(X,S)$. This notion is very similar to the notion of  \emph{folding} of a Coxeter graph used  by Crisp in~\cite{crisp} to study morphisms between two spherical type Artin-Tits groups. Indeed, in this case all the foldings have to be strong, and it is shown in~\cite{godelle} that such a morphism is characterized by conditions that are very closed to properties~$1$ and~$2$ stated in the above definition.

Foldable solutions exist, since  decomposable solutions are foldable as it will be shown below.

\begin{thm} Let $X$ be a finite set and $(X,S)$ be  a non-degenerate, symmetric set-theoretical solution of the QYBE. If $(X,S)$ is decomposable then it is strongly foldable, and the folding is a trivial solution to the QYBE. Moreover, $(X,S)$ is decomposable if and only of it has a strong folding~$(X',S')$ which is  a trivial solution and such that $\# X' = 2$.\label{Prop:prpfoldable}
 \end{thm}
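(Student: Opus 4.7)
The plan is to prove both implications of the equivalence via Theorem~\ref{theo:theoprincip}, which translates invariant non-degenerate subsets into standard parabolic subgroups. Assume first that $(X,S)$ is decomposable, writing $X=Y_1\sqcup Y_2$ with $Y_1,Y_2$ nonempty, invariant and non-degenerate. By Theorem~\ref{theo:theoprincip} each $Y_i$ generates a standard parabolic subgroup $G_{Y_i}$, whose associated parabolic submonoid $M_{Y_i}$ is a Garside monoid of the same type as $M$ (it is the structure monoid of the sub-solution $(Y_i,S|_{Y_i\times Y_i})$), with Garside element $\Delta_i$ equal to the common right and left lcm of $Y_i$ in $M$. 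Condition~(1) of the definition of a foldable solution is immediate, and strongness will follow since each $Y_i$ is a set of atoms of $M$.

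The heart of the proof is to establish $\Delta=\Delta_1\Delta_2=\Delta_2\Delta_1$. First, the bijectivity of $S$ together with $S(Y_i\times Y_i)\subseteq Y_i\times Y_i$ forces $S(Y_1\times Y_2)\subseteq Y_2\times Y_1$ and $S(Y_2\times Y_1)\subseteq Y_1\times Y_2$, so every mixed defining relation of $G$ has the form $y_1y_2=y_2'y_1'$ with $y_2'\in Y_2,\ y_1'\in Y_1$. An induction on $|u|$ then yields a pushing lemma: for any $y\in Y_1$ and $u\in M_{Y_2}$ there exist $u'\in M_{Y_2}$ and $y'\in Y_1$ with $|u'|=|u|$ and $yu=u'y'$. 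Iterating on the letters of $\Delta_1$ produces $\Delta_1\Delta_2=\Delta_2'\Delta_1'$ with $\Delta_j'\in M_{Y_j}$ of length $|Y_j|$. Since $M_{Y_j}$ is the structure monoid of a non-degenerate symmetric solution, Proposition~\ref{prop_simple_lcm} applies inside it and pins down $\Delta_j'=\Delta_j$; the product therefore has length $|X|$ and is left-divisible by every atom of $X$, so by Proposition~\ref{prop_simple_lcm} again it coincides with the right lcm $\Delta$ of $X$. Alternatively, this identity can be read off from the crossed-product decomposition of $G$ over $G_{Y_1}$ and $G_{Y_2}$ already invoked in Section~4 via \cite[Prop.~4.5]{picantin}.

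With $\Delta=\Delta_1\Delta_2=\Delta_2\Delta_1$ in hand, condition~(2) follows: the right (and left) complement of $\Delta_1$ by $\Delta_2$ in $M$ is $\Delta_2$, so $X'=\{\Delta_1,\Delta_2\}$ is closed under complements; the sublattice $\{1,\Delta_1,\Delta_2,\Delta\}$ of $\Div{\Delta}$ is readily checked against the three clauses of Definition~\ref{defn_Garside_subgp}, yielding a Garside subgroup of $G$ whose atom set is $X'$. The induced map $S'$ on $X'\times X'$ sends $(\Delta_1,\Delta_2)$ to $(\Delta_2,\Delta_1)$, that is, it is the trivial solution on a two-element set, which proves the forward implication in full. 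For the converse, any strong folding $(X',S')$ with $\#X'=2$ provides a partition $X=X_1\sqcup X_2$ into two non-empty subsets, each generating a standard parabolic subgroup of $G$; Theorem~\ref{theo:theoprincip} identifies each $X_i$ with an invariant non-degenerate subset, so $(X,S)$ is decomposable.

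The main obstacle I anticipate is the precise justification of $\Delta_1\Delta_2=\Delta_2\Delta_1=\Delta$: length counting alone is insufficient, and the internal pushing argument requires both the cross-separability $S(Y_1\times Y_2)\subseteq Y_2\times Y_1$ and the observation that Proposition~\ref{prop_simple_lcm} can be applied inside each $M_{Y_j}$, since $M_{Y_j}$ is itself a structure monoid of the type studied in Section~4; the alternative shortcut is to appeal to the Picantin crossed-product decomposition already established there.
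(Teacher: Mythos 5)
Your overall architecture matches the paper's: invariant subsets give standard parabolic subgroups with Garside elements $\Delta_i$, the two-element set $\{\Delta_1,\Delta_2\}$ (in the paper, $\{\Delta_i,\Delta_{\hat\imath}\}$) is shown to be the atom set of a Garside subgroup whose lattice is $\{1,\Delta_1,\Delta_2,\Delta\}$, the induced solution is the flip on two elements, and the converse is read off from Theorem~\ref{theo:theoprincip}. The one substantive divergence is the key identity $\Delta=\Delta_1\Delta_2=\Delta_2\Delta_1$: the paper does not prove it, but imports it (together with commutativity and the whole free-abelian lattice $D=\{\Delta_1^{\varepsilon_1}\cdots\Delta_k^{\varepsilon_k}\}$) from Picantin and \cite{chou_art} via Proposition~\ref{prop:proppicetchou}, working with the full decomposition into $k$ classes and only afterwards cutting down to $\#X'=2$. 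Your citation fallback is therefore exactly the paper's route, and your elementary attempt, if completed, would make the proof more self-contained.

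However, as written the elementary attempt has a genuine gap at the step ``Proposition~\ref{prop_simple_lcm} applies inside $M_{Y_j}$ and pins down $\Delta_j'=\Delta_j$.'' Your pushing lemma only tells you that $\Delta_1\Delta_2=\Delta_2'\Delta_1'$ with $\Delta_2'\in M_{Y_2}$ of length $|Y_2|$; but an element of $M_{Y_2}$ of length $|Y_2|$ need not be the lcm of $Y_2$ (already in the trivial solution on $\{a,b\}$ the element $a^2$ has length $2$ without being $ab$), and Proposition~\ref{prop_simple_lcm} gives no uniqueness statement for elements of a given length, only for divisors of $\Delta$ that are lcms of prescribed subsets. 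The correct way to close this is not via the rewritten word but via the action: since $Y_2$ is invariant and non-degenerate, left multiplication by $\Delta_1$ permutes $Y_2$ through the $g$-action, so $\Delta_1\Delta_2=\Delta_1\bigl(\bigvee_{y\in Y_2}y\bigr)=\bigvee_{y\in Y_2}\Delta_1 y$ is left-divisible by $g_{\Delta_1}(y)$ for every $y\in Y_2$, i.e.\ by every element of $Y_2$, and of course by every element of $Y_1$; hence $\Delta$ left-divides $\Delta_1\Delta_2$ and equality follows by comparing lengths. With that repair (or with the appeal to Proposition~\ref{prop:proppicetchou}), the remaining verifications you describe --- the lattice and complement closure of $\{1,\Delta_1,\Delta_2,\Delta\}$, the triviality of $S'$, and the converse --- go through as in the paper.
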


In particular, we get Theorem~\ref{theo:theoprincip2}. When proving Theorem~\ref{Prop:prpfoldable}, we shall need some results proved in~\cite{picantin} and~\cite{chou_art}, and recalled below.

\begin{prop}Let $(X,S)$  be a decomposable non-degenerate, symmetric set-theoretical solution of the QYBE and let $M$ and $G$ be the corresponding  Garside monoid and group with $X$ as atoms set.  For $x$ in $X$, we set $\Delta_x = \vee \{b^{-1}(x\vee b) ; b \in M\}$.\\
(i)\cite{picantin, godelle2} The relation $\sim$ on $X$ defined by $x\sim y$ if $\Delta_x = \Delta_y$  is an equivalence relation. Moreover, if $\Delta_x \neq \Delta_y$ then $\Delta_x \land \Delta_y = \Delta_x \tilde{\land} \Delta_y = 1$ in $M$.\\
(ii)\cite{picantin, godelle2} Let $Y_1,\cdots, Y_k$ be the equivalence classes of $\sim$, and set $\Delta_i = \Delta_x$ for $x\in Y_i$. Then the subgroup of $G$ generated by $\Delta_1,\cdots,\Delta_k$ is a free abelian group with base $\Delta_1,\cdots,\Delta_k$.\\
(iii) \cite{chou_art} Assume  $X= \cup_{i=1}^{i=k} Y_i$, where $Y_i$ are invariant subsets of $X$ and $k \geq 2$, since the solution is decomposable. Then $\Delta = \Delta_1\cdots \Delta_k$, where $\Delta$ is the Garside element in $M$ and ~$\Delta_i$ is the lcm of $Y_i$. \label{prop:proppicetchou}
\end{prop}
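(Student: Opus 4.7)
The plan is to deduce each part by combining Picantin's crossed-product decomposition theory of Garside monoids \cite{picantin} with the correspondence between invariant subsets and standard parabolic subgroups established in Theorem~\ref{theo:theoprincip}. Since the three statements are imported from \cite{picantin}, \cite{godelle2}, and \cite{chou_art}, I will sketch the main ideas rather than reproduce the originals. The main obstacle will be the coprimality assertion in part~(i), on which parts~(ii) and~(iii) ultimately rest.

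For part~(i), reflexivity, symmetry, and transitivity of $\sim$ are formal, so the substantive content is the coprimality claim. The first step is to recast $\Delta_x$ as the smallest balanced element of $M$ that is a right multiple of $x$ and whose divisor set $\Div{\Delta_x}$ is stable under right and left complementation inside $\Div{\Delta}$. Assuming $d = \Delta_x \land \Delta_y \neq 1$, I would pick an atom $z$ that left-divides $d$: closure of $\Div{\Delta_x}$ and of $\Div{\Delta_y}$ under complementation forces $\Delta_z$ to left-divide both $\Delta_x$ and $\Delta_y$, and minimality in the definition of $\Delta_x$ and of $\Delta_y$ then yields $\Delta_x = \Delta_z = \Delta_y$, contradicting $\Delta_x \neq \Delta_y$. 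The dual argument handles the right gcd $\tilde{\land}$.

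For part~(ii), I would invoke Picantin's theorem on crossed-product decompositions: the closure and coprimality properties of $\Div{\Delta_i}$ in $\Div{\Delta}$ obtained in part~(i) show that each $\Delta_i$ is quasi-central with respect to the others, so the $\Delta_i$'s commute pairwise in $M$ and hence in $G$. Consequently the subgroup of $G$ they generate is abelian. For freeness, I would assume a relation $\prod_i \Delta_i^{n_i} = 1$ in $G$, separate positive from negative exponents using the commutativity, and use the embedding of $M$ in $G$ to rewrite the relation as an equality $\prod_{n_i>0}\Delta_i^{n_i} = \prod_{n_j<0}\Delta_j^{-n_j}$ in $M$; coprimality from part~(i) forces both sides to reduce to $1$, and hence all $n_i$ to vanish.

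For part~(iii), Theorem~\ref{theo:theoprincip} together with Lemma~\ref{lem_invariant_balanced_2} identifies each $\Delta_i$ with the Garside element of the standard parabolic submonoid $M_{Y_i}$. The pairs $(\Delta_i, \Delta_j)$ for $i \neq j$ are automatically two-sidedly coprime, since any common divisor would lie in $M_{Y_i} \cap M_{Y_j}$ by Lemma~\ref{lem_invariant_balanced}(i), and disjointness of $Y_i$ and $Y_j$ forces it to be $1$. The $\Delta_i$'s therefore commute by part~(ii), and their product $\Delta_1 \cdots \Delta_k$ is a common right multiple of every atom of $M$, hence a right multiple of $\Delta = \lor X$. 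A length count, using the fact recalled in Proposition~\ref{prop_simple_lcm}(iii) that the right lcm of $|Y_i|$ distinct atoms has length $|Y_i|$ together with additivity of lengths across coprime factors, matches the length of $\Delta_1 \cdots \Delta_k$ with $|X| = $ length of $\Delta$, and so yields the equality $\Delta = \Delta_1 \cdots \Delta_k$.
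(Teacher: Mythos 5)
First, a point about the comparison itself: the paper contains no proof of Proposition~\ref{prop:proppicetchou}. It is stated purely as a recollection of results from \cite{picantin}, \cite{godelle2} and \cite{chou_art}, to be quoted in the proof of Theorem~\ref{Prop:prpfoldable}. So your sketch cannot be matched against an internal argument; it has to stand on its own as a reconstruction of the cited proofs, and as such it has genuine gaps. The central one is in part~(i). From an atom $z$ with $z \mid \Delta_x \land \Delta_y$, your minimality characterization of $\Delta_z$ yields only the divisibilities $\Delta_z \mid \Delta_x$ and $\Delta_z \mid \Delta_y$; to reach a contradiction with $\Delta_x \neq \Delta_y$ you need the \emph{equalities} $\Delta_x = \Delta_z = \Delta_y$, i.e.\ the converse divisibility $\Delta_x \mid \Delta_z$, which by your own minimality principle would require $x \mid \Delta_z$ --- and nothing in your set-up provides that. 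The symmetry ``$z$ divides $\Delta_x$ if and only if $x$ divides $\Delta_z$'' is precisely the non-trivial content of Picantin's lemma; it is extracted from the explicit formula $\Delta_x = \vee \{b^{-1}(x\vee b) ;\, b \in M\}$ and its invariance under the complement operation, not from the recasting as a minimal balanced multiple with complement-closed divisor set. As written, your argument assumes the point at issue.

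Part~(ii) has a second genuine gap: the freeness step rests on the implication ``if $a$ is left-coprime to each of $b_1,\dots,b_m$, then $a$ is left-coprime to $b_1\cdots b_m$'', and this is false in Garside monoids --- even within the paper's own examples: in the monoid of Example~\ref{example_struct_gp_indecomposable_2} one has $x_1 \land x_2 = 1$ (distinct atoms), yet $x_1$ left-divides $x_2\cdot x_2 = x_1^2$. What rules this behaviour out for the $\Delta_i$'s is their quasi-centrality (conjugation by $\Delta_i$ permutes the atoms and respects the classes), i.e.\ exactly the crossed-product machinery of \cite{picantin} that your sketch invokes only to obtain commutativity. In part~(iii) there is a further slip: the $\Delta_i$ there are the right lcms of the \emph{given} invariant subsets $Y_i$, which are a priori arbitrary invariant sets partitioning $X$ and need not be the $\sim$-classes of parts~(i)--(ii); hence commutativity cannot be imported ``by part~(ii)'' without first identifying the two (or showing each $Y_i$ is a union of classes and that its lcm factors accordingly). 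Nor does commutativity follow from your coprimality observation, since two-sidedly coprime elements of a Garside monoid need not commute ($\sigma_1$ and $\sigma_2$ in a braid monoid). That coprimality observation itself, via Lemma~\ref{lem_invariant_balanced}(i), is correct and pleasant, and your finish --- once the factors can be permuted, every atom divides $\Delta_1\cdots\Delta_k$, hence $\Delta$ left-divides the product, and the length count from Proposition~\ref{prop_simple_lcm}(iii) forces equality --- is sound. But the commutation of the $\Delta_i$'s is precisely where the cited papers do their work, and your sketch leaves it unsupported.
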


\begin{proof}[Proof of Theorem~\ref{Prop:prpfoldable}]
Assume $(X,S)$  is decomposable and  $X= \cup_{i=1}^{i=k} Y_i$, where $Y_i$ are invariant subsets of $X$ and $k \geq 2$. From Proposition \ref{thm_invariant_parabolic}, the subgroup generated by $Y_i$  is a parabolic subgroup with Garside element $\Delta_i$, where $\Delta_i$ is the lcm of $Y_i$. So, the first condition in the definition of a strongly foldable solution is satisfied.
 Consider the set~$D = \{\Delta_1^{\varepsilon_1}\cdots \Delta_k^{\varepsilon_k}\mid \varepsilon_i = \{0,1\}\}$. Gathering several results from \cite[Sec.~2]{picantin} and Proposition~\ref{prop:proppicetchou}, we obtain that $D$ is a sublattice of $\Div{\Delta}$ and it is closed by complement and lcm: $$(\Delta_1^{\varepsilon_1}\cdots \Delta_k^{\varepsilon_k}) \land (\Delta_1^{\varepsilon'_1}\cdots \Delta_k^{\varepsilon'_k}) = \Delta_1^{\min(\varepsilon_1,\varepsilon'_1)}\cdots \Delta_k^{\min(\varepsilon_k,\varepsilon'_k)}$$ and $$(\Delta_1^{\varepsilon_1}\cdots\Delta_k^{\varepsilon_k}) \lor( \Delta_1^{\varepsilon'_1}\cdots \Delta_k^{\varepsilon'_k}) = \Delta_1^{\max(\varepsilon_1,\varepsilon'_1)}\cdots \Delta_k^{\max(\varepsilon_k,\varepsilon'_k)}.$$ Moreover, we have $$(\Delta_1^{\varepsilon_1+\varepsilon'_1 }\cdots\Delta_k^{\varepsilon_k+\varepsilon'_k}) \land \Delta = \Delta_1^{\max(\varepsilon_1,\varepsilon'_1)}\cdots \Delta_k^{\max(\varepsilon_k,\varepsilon'_k)} $$
Let $H$ be the subgroup  generated by $D$, then $H$ is a Garside subgroup of $G(X,S)$, with $\Delta$ as Garside element and atoms set $X'=\{\Delta_i;$ $1 \leq i \leq k\}$. That is, the second condition in the definition of a strongly foldable solution is also satisfied, so $(X,S)$ is strongly foldable. Furthermore,  $H$ is the structure group of a solution $(X',S')$ of the QYBE from Proposition \ref{thm_garsidesubgp_solution}, and  it is free abelian (from Proposition~\ref{prop:proppicetchou}(ii)), so  $(X',S')$ is the trivial solution on a set with $k$ elements. Thisa implies that  $(X,S)$ is strongly foldable and that the folding $(X',S')$  of $(X,S)$ is a trivial solution to the QYBE. It remains to show that it is possible to choose the folding $(X',S')$  such that $\# X' = 2$.
Let  $i$ in~$\{1,\cdots,k\}$ such that $Y_i$ and $Z_i = \cup_{j\neq i}Y_i$ are two invariant subsets that generate parabolic subgroups whose Garside elements are $\Delta_i$ and $\Delta_{\hat{i}} = \prod_{j\neq i}\Delta_i$, respectively.  We have $\Delta_i\Delta_{\hat{i}} = \Delta_{\hat{i}}\Delta_i = \Delta$.  The subgroup~$H'_i$ of $G(X,S)$ generated by $\Delta_i$ and $\Delta_{\hat{i}}$ satisfies $H'_i\cap \Div{\Delta} = \{1,\Delta_i,\Delta_{\hat{i}},\Delta \}$. Hence, $H'_i$ is a strong folding of $G(X,S)$ that corresponds to a trivial solution. Conversely, let $(X',S')$ be a strong folding which is a trivial solution and such that $\# X' = 2$. Then,  then by definition, $X'$ contains two elements that are the right (and left) lcm of two invariant subsets of $X$,  $Y_1$ and $Y_2$, that satisfy  $X = Y_1 \cup Y_2$. Therefore, $X$ is decomposable.
\end{proof}

Decomposable solutions of the QYBE are foldable. However, there exist foldable solutions that are not decomposable.
\begin{ex} \label{exemplefoldingpasstrong}
Consider the set-theoretical solution~$(X',S')$ whose structure group is defined by the presentation~$\langle x,y\mid x^2 = y^2\rangle$. Then, $(X',S')$ is a folding of the solution $(X,S)$ defined in Example~\ref{example_struct_gp_indecomposable_2}.
Indeed, consider the following partition of $X$:  $X = \{x_1,x_2\}\cup\{x_3,x_4\}$. The sets $\{x_1,x_2\}$ and $\{x_3,x_4\}$ generate  Garside subgroups with Garside elements $x_1^2$ and $x_3^2$ respectively. The set
$X'=\{x = x_1^2,$ $y = x_3^2\}$ is closed under right and left complement and generates a Garside subgroup of $G(X,S)$. So, $(X',S')$ is a folding of $(X,S)$ but this folding is not strong, since the Garside subgroups generated by  $\{x_1,x_2\}$ and $\{x_3,x_4\}$ are not parabolic through they are atomic.
\end{ex}

As seen in Theorem~\ref{Prop:prpfoldable} and Example~\ref{exemplefoldingpasstrong}, foldable and strongly foldable solutions do exist. Conversely the notion of folding can be used as a tool to build new solutions: starding from a solution~$(X',S')$ one can try to \emph{substitute} each element~$x$ of~$X'$ by the Garside element of another solution~$(X_x,S_x)$ so that one obtains a new solution~$(\cup_{x\in X'} X_x,S)$ which is foldable, with $(X',S')$ as a folding. Clearly, the difficult point is to define a bijective map~$S$ that is compatible with~$S'$ and so that one gets a solution to the QYBE.

As a final comment, we raise the question of the existence of a strongly foldable solution~$(X,S)$ that is not decomposable.




\bigskip\bigskip\noindent
{ Fabienne Chouraqui,}

\smallskip\noindent
Bar-Ilan University, Ramat Gan, Israel.
\smallskip\noindent
E-mail: {\tt fabienne@tx.technion.ac.il}

\bigskip\bigskip\noindent
{ Eddy  Godelle,}

\smallskip\noindent
Universit\'e de Caen, UMR 6139 du CNRS, LMNO, Campus II, 14032 Caen cedex, France.
\smallskip\noindent
E-mail: {\tt eddy.godelle@unicaen.fr}


\end{document}